\renewcommand{\c@equation}{\c@thm}
\newcommand{\bbq}{{\mathbb Q}}
\newcommand{\bbr}{{\mathbb R}}
\newcommand{\bbz}{{\mathbb Z}}
\newcommand{\Del}{{\Delta}}
\newcommand{\im}{{\operatorname {Im}}}
\newcommand{\Z}{\bbz}
\newcommand{\Q}{\bbq}
\newcommand{\R}{\bbr}
\newcommand{\beq}{\begin{equation}\begin{aligned}}
\newcommand{\eeq}{\end{aligned}\end{equation}}
\newcommand{\beginp}{\begin{proof}[証明]}
\newcommand{\beqn}{\begin{eqnarray}}
\newcommand{\eeqn}{\end{eqnarray}}
\newcommand{\op}{\operatorname}
\newcommand{\beqnn}{\begin{eqnarray*}}
\newcommand{\eeqnn}{\end{eqnarray*}}
\newcommand{\beqnna}{\begin{eqnarray}\begin{array}}
\newcommand{\eeqnna}{\end{array}\end{eqnarray}}
\newcommand{\bsub}{\begin{subarray}}
\newcommand{\esub}{\end{subarray}}
\newtheorem{thm}{Theorem}[section]
\newtheorem{lem}[thm]{Lemma}
\newtheorem{cor}[thm]{Corollary}
\newtheorem{prop}[thm]{Proposition}
\theoremstyle{definition}
\newtheorem{defn}[thm]{Definition}
\newtheorem{rem}[thm]{Remark}        % \renewcommand{\therem}{}
\title{Lower bounds of the canonical height on quadratic twists of 
elliptic curves}
\author{Tadahisa Nara}
\date{}
\keywords{elliptic curve, Mordell--Weil group, 
canonical height, quadratic twist}
\subjclass[2011]{11G05, 11Y60}
\begin{document}

\begin{abstract}
%We study an infinite family of Mordell curves 
%(i.e. the elliptic curves in the form  
%$y^2=x^3+n$, $n\in\Z$) 
%over $\Q$ 
%with three explicit integral points. 
%We show that the points are independent 
%in certain cases. 
%We describe how to compute bounds of 
%the canonical heights of the points. 
%Using the result we show that 
%any pair in the three points  
%can always be a part of a basis 
%of the free part of the Mordell--Weil group. 
%
%
We compute a lower bound of the canonical height 
on quadratic twists of certain elliptic curves. 
Also we show a simple method for 
constructing families of quadratic twists 
with an explicit rational point. % from cubic polynomials. 
Using the above lower bound, 
we show that the explicit rational point is primitive 
as an element of the Mordell--Weil group. 
\end{abstract}

\maketitle

\section{Introduction}
It is known that 
for every elliptic curve, 
there exists a positive lower bound 
of the canonical heights of non-torsion rational points (\cite{sil2}). 
There is also an algorithm which computes a lower bound 
for a given elliptic curve (\cite{cs}). 
 
In the paper \cite[Proposition 8.3]{duquesne1}, 
Duquesne gave an explicit lower bound of 
the canonical heights of raional points 
on a certain family of elliptic curves. 
The family consists of quartic twists of the elliptic curve 
$y^2=x^3-x$. 
Similarly Fujita and the auther gave an explicit lower bound 
on a family consisting of %made up of 
sextic twists of the elliptic curve $y^2=x^3+1$ (\cite{fn1}). 
Both results are used to show 
that some explicit points can always be in a system of generators of 
the Mordell--Weil groups. 

In this paper we give 
an explicit lower bound 
for a family consisting of 
quadratic twists of an elliptic curve. 
There is already a non-explicit bound 
(\cite[Exercise 8.16]{aec}) 
given by a different method from ours (see Remark \ref{rem:O(1)}). 
Making the bound explicit 
enables us to study explicitly the behavior of 
a certain family of the quadratic twists of a given elliptic curve. 
For example, we can proove Theorem \ref{thm:D013} below.  

Our lower bound is computed by 
using 
%similar to that 
%in the two above papers (\cite{duquesne1,fn1}), 
%that is, 
the decomposition of the canonical height into the local heights 
and they are computed by 
the combination of Cohen's algorithm (\cite[Algorithm 7.5.7]{cohen0}) 
and Silverman's algorithm (\cite[Theorem 5.2]{sil1}). 
In \cite{duquesne1} and \cite{fn1}, by the simplicity of the forms of 
the Weierstrass equations, the estimates of the non-archimedean part 
of the local height were given by ad hoc arguments. 
However, in our case 
more systematic argument is required. 
The key is an identity between division polynomials 
of elliptic curves (Lemma \ref{eq:id}).

Our main results are as follows. 

\begin{thm}
\label{main}
Let $E/\Q$ be an elliptic curve defined by $y^2=x^3+a_2x^2+a_4x+a_6$ 
$(a_2, a_4, a_6 \in \Z)$ with the discriminant $\Delta$. 
Let $D$ be a square-free integer 
and $E_D/\Q$ the elliptic curve 
$y^2=x^3+a_2Dx^2+a_4D^2x+a_6D^3$. 
%(i.e. the quadratic twist of $E$ by $D$)
Assume that $\Delta$ is 6th-power-free. 
%Then if $P$ is a rational non-torsion point in $E_D$, 
Then for $P \in E_D(\Q)\setminus E_D(\Q)[2]$, 
\begin{align*}
\hat{h}(P) > \frac{1}{4}\log |D| 
+\frac{1}{16} \log \frac{(1-|q|)^8}{|q|}
+\frac1{4}\log \left|\frac{\omega}{2\pi}\right|
-\frac7{16}\sum_{p|\Delta,p\neq2}\log p-\frac5{12}\log 2, 
\end{align*}
where 
$\omega_1$ and $\omega_2$ are periods of $E$ such that $\omega_1>0,\ 
\op{Im}(\omega_2)>0$ and $\op{Re}(\omega_2/\omega_1)=0$ or $-1/2$, 
$q=\exp(2\pi i{\omega_2}/{\omega_1})$ and  
\begin{eqnarray*}
\omega=
\left\{ \begin{array}{ll}
\omega_1  & (D>0) \\
\im(\omega_2) & (D<0, \Delta>0) \\
2\im(\omega_2) & (D<0, \Delta<0) 
 \end{array} \right..
\end{eqnarray*}

%
%$\theta=\sum^{\infty}_{n=0}(-1)^nq^\frac{n(n+1)}{2}
%\sin\left\{{2\pi}(2n+1) \re (z_P)/{\omega_1} \right\}$, 
%and $z_P$ is the elliptic logarithm of $P$. 
%Recall that $z_P$ is the complex number 
%in $\{t_1\omega_1+t_2\omega_2:0\leq t_1,t_2\leq 1\}$ 
%such that $\wp(z_P)=x(P)$ and 
%$\wp'(z_P)=2y(P)$, where $\wp$ is the Weierstrass $\wp$-function. 
%
\end{thm}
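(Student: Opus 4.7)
I would use the standard decomposition of the canonical height into local contributions,
$$\hat{h}(P) = \hat{\lambda}_\infty(P) + \sum_p \hat{\lambda}_p(P),$$
and bound each local height from below individually. Only primes of bad reduction of $E_D$ --- those dividing $\Delta\cdot D^6$ --- contribute nontrivially on the non-archimedean side.

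For the archimedean term I would apply Cohen's algorithm, which expresses $\hat{\lambda}_\infty(P)$ via the Weierstrass $\sigma$-function of $E_D$ and its product expansion in the nome $q = e^{2\pi i\omega_2/\omega_1}$. The quadratic twist scales the period lattice of $E$ by $\sqrt{D}$ (up to a swap of real and imaginary periods that depends on the signs of $D$ and of $\Delta$), so $j(E_D)=j(E)$ and the \emph{same} $q$ appears. Minimising $|\sigma(z)\exp(-\eta_1 z^2/(2\omega_1))|$ over $z$ in a fundamental domain then produces the factor $\frac{1}{16}\log((1-|q|)^8/|q|)$, while the scaling pushes the real period of $E_D$ into the three cases defining $\omega$ and yields the $\frac{1}{4}\log|\omega/(2\pi)|$ term.

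For the non-archimedean parts I would use Silverman's algorithm, which writes $\hat{\lambda}_p(P)$ as a rapidly convergent series involving $v_p(\psi_m(P))$ for the division polynomials $\psi_m$ of $E_D$. In the restricted settings of \cite{duquesne1, fn1} these valuations could be bounded by ad hoc tricks exploiting the simple shape of the equations; here the Weierstrass form is general, and the crucial ingredient is Lemma \ref{eq:id}, an identity relating division polynomials of $E_D$ to those of $E$. This identity reduces the $p$-adic analysis to a uniform estimate at each bad prime. A careful bookkeeping then yields $-\frac{7}{16}\log p$ at each odd bad prime of $E$ and $-\frac{5}{12}\log 2$ at $p=2$ (the larger constant reflecting the usual extra case analysis at $2$ for integral models), while the combined effect of the archimedean scaling with the contributions of primes dividing $D$ accounts for the $+\frac{1}{4}\log|D|$ term.

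The main obstacle is thus the uniform, closed-form estimation of the non-archimedean local heights for an arbitrary Weierstrass model; Lemma \ref{eq:id} is precisely what makes such a systematic estimate possible. The hypothesis that $\Delta$ be $6$th-power-free is used to guarantee that the given model is minimal at every bad prime of $E$, so that the valuations $v_p(\Delta)$ stay small enough for the bounds to remain tight.
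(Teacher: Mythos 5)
Your plan has the right skeleton, and it matches the paper's overall strategy: decompose $\hat{h}$ into local heights, bound the archimedean part with Cohen's algorithm (noting that $j(E_D)=j(E)$ so the same $q$ survives, and that the period lattice scales by $|D|^{-1/2}$ up to a rotation that produces the three-case definition of $\omega$), use Silverman's algorithm for the finite primes with Lemma~\ref{eq:id} as the technical engine, and observe that $6$th-power-freeness of $\Delta$ makes every model minimal. However, there is a genuine gap in how you propose to combine the two halves, and it would make the argument fail if carried out literally.

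The issue is that $\lambda_\infty(Q)$ does \emph{not} admit a clean standalone lower bound of the form $\frac14\log|D|+\frac1{16}\log\frac{(1-|q|)^8}{|q|}+\frac14\log|\omega/(2\pi)|$, nor do the non-archimedean local heights $\lambda_p(Q)$ at bad primes satisfy $\lambda_p(Q)\ge -\frac{7}{16}\log p$. In the paper, the output of Cohen's algorithm is bounded below by an expression that still carries the trailing terms $-\frac32\log\delta+\frac12\log|\beta|+\frac1{16}\log|\Delta|$, where $Q=(\alpha/\delta^2,\beta/\delta^3)$. Those trailing terms are then \emph{distributed prime by prime} and combined with the non-archimedean contributions: the $-\frac32\log\delta$ is offset against $\sum_{p\mid\delta}\lambda_p(Q)=2\log\delta$, and for each bad prime one bounds the \emph{combined} quantity $\lambda_p(Q)+\frac12\log|\beta_{\{p\}}|+\frac1{16}\log|\Delta_{\{p\}}|$ rather than $\lambda_p(Q)$ alone. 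This coupling is essential: at an additive prime dividing $D$, $\lambda_p(Q)$ by itself can be as negative as $-\frac{C}{4}\log p$ with $C$ as large as $11$; it is only after adding $\frac{B}{2}\log p+\frac{N}{16}\log p$ (with $B=v_p(\beta)$, $N=v_p(\Delta)$) that the uniform constant $-\frac{7}{16}\log p$ emerges, thanks to the inequalities among $A,B,C,N$ that Lemma~\ref{eq:id} supplies. Your proposal, by presenting the archimedean minimisation and the $p$-adic bookkeeping as separable, would not reach the stated bound.

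Two smaller corrections. First, Lemma~\ref{eq:id} is not an identity ``relating division polynomials of $E_D$ to those of $E$''; it is a linear identity internal to $E_D$, namely $-16\,k_D\psi_{3,D}+4\,l_D\psi_{2a,D}=-\Delta D^6$ with explicit auxiliary polynomials $k_D, l_D$, whose value lies in forcing $\min\{v_p(\psi_{2a,D}),v_p(\psi_{3,D})\}$ to be small in terms of $v_p(\Delta D^6)$. Second, the archimedean bound in the paper does not come from minimising the quasi-periodic $\sigma$-expression over a fundamental domain; it comes from the trivial theta-function bound $|\theta|\le 1/(1-|q|)$ inside Cohen's formula, which is far cruder but suffices and keeps the $\beta,\delta,\Delta$ terms visible for the subsequent redistribution.
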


\begin{rem}
\label{rem:O(1)}
We have 
$\hat{h}(P) > \frac{1}{4}\log |D| +O(1)$ 
by \cite[Exercise 8.16 (c)]{aec}. 
The proof does not use the local height functions. 
%in contrast to ours. 
Note that our $\hat{h}$ is twice of 
that in \cite{aec}, \cite{cohen0} and \cite{sil1}.
 
\end{rem}
$E_D$ in  Theorem \ref{main}
is called the quadratic twist of $E$ by $D$, 
which is isomorphic  over $\Q$ to the curve defined by 
$Dy^2=x^3+a_2x^2+a_4x+a_6$. 

Using Theorem \ref{main}, 
we can also show the following theorem. 

\begin{thm}
\label{thm:D013}
Let $t \in \Z$, $D(t)=t^6+4t^4+30t^3+5t^2+54t+245$, 
$E_D$ the elliptic curve $y^2=x^3+2D(t)x^2+163D(t)^2x+2205D(t)^3$ and  
$P$ the point $( D(t)(t^4+2t^2+12t)$, $D(t)^2(t^3+t+3) )$ on $E_D$. 
We assume that $D(t)$ is square-free. 
Then $P$ is a primitive point if $|t|\geq 2216$. 
In particular $E_D(\Q) \simeq \langle P \rangle $ if 
$\op{rank}E_D(\Q)=1$. 
\end{thm}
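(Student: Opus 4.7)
The plan is to combine Theorem \ref{main} with a sharp upper bound on $\hat h(P)$. If $P$ is not primitive, then $P = nR + T$ with $n \ge 2$, non-torsion $R \in E_D(\Q)$, and torsion $T$; since $\hat h$ vanishes on torsion, this yields $\hat h(P) = n^2 \hat h(R) \ge 4 B(t)$, where $B(t)$ is the specialization of the Theorem \ref{main} lower bound to $E_D$. It therefore suffices to show $\hat h(P) < 4 B(t)$ for $|t| \ge 2216$.

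First I would specialize Theorem \ref{main} to the fixed base curve $E : y^2 = x^3 + 2x^2 + 163x + 2205$. Its discriminant, periods $\omega_1, \omega_2$, $q$ and $\omega$ are all numerical constants, so the bound takes the form $B(t) = \tfrac14 \log |D(t)| + C_E$ for an explicit $C_E$. Next I would bound $\hat h(P)$ from above. The naive estimate $\hat h(P) \le \log|x(P)| + O(1)$ grows like $10 \log|t|$, which exceeds $4 B(t) \sim 6 \log|t|$, so a sharper bound is needed. The key observation is that $P = (D(t) X, D(t)^2 Y)$ with $X = t^4 + 2 t^2 + 12 t$ and $Y = t^3 + t + 3$; under the $\Q(\sqrt{D(t)})$-isomorphism $\phi : E_D \simeq E$, $P$ maps to $(X, Y \sqrt{D(t)})$ on $E$, whose $x$-coordinate has height only $\sim 4 \log|t|$, so $\hat h(P) \sim 4 \log|t|$. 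Equivalently, in the decomposition $\hat h(P) = \hat\lambda_\infty(P) + \sum_p \hat\lambda_p(P)$, each odd prime $p \mid D(t)$ is a prime of additive reduction at which $P$ reduces to the singular point, so $\hat\lambda_p(P)$ contributes a negative term that cancels the $\log|D(t)|$ growth of $\hat\lambda_\infty(P)$.

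Concretely I would use Cohen's algorithm (\cite[Algorithm 7.5.7]{cohen0}) for the archimedean part and Silverman's algorithm (\cite[Theorem 5.2]{sil1}) for the non-archimedean parts to produce an explicit upper bound $U(t) \ge \hat h(P)$, and then verify that $U(t) < 4 B(t)$ holds for $|t| \ge 2216$ by a finite numerical check together with an asymptotic analysis. A separate specialization (for any convenient value of $t$) confirms that $P$ itself is not torsion, completing the proof that $P$ is primitive; the final assertion $E_D(\Q) = \langle P \rangle$ when the rank is $1$ then follows upon checking that $E_D(\Q)_{\mathrm{tors}}$ is trivial (for instance by reduction modulo a small prime of good reduction).

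The hard part is tracking the cancellation in the upper bound: the archimedean local height $\hat\lambda_\infty(P)$ contains a contribution $\sim \log|D(t)| \sim 6 \log|t|$ coming from the factor $D(t)$ in $x(P)$, which must be cancelled almost exactly by the negative contributions at primes $p \mid D(t)$ in order for $U(t)$ to beat $4 B(t)$. The cancellation is structural---it reflects the $\Q(\sqrt{D(t)})$-isomorphism above---but turning it into an explicit bound tight enough to produce the specific threshold $|t| \ge 2216$ requires uniform control over the minimal Weierstrass model of $E_D$ and its Tamagawa invariants at the varying primes dividing $D(t)$.
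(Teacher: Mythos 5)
Your proposal is correct and takes essentially the same route as the paper: the lower bound on $\hat h$ of any hypothetical $R$ with $P=mR$, $|m|\ge 2$, comes from Theorem \ref{main} specialized to $E:y^2=x^3+2x^2+163x+2205$ (the paper's Corollary \ref{cor:low013}), and the explicit upper bound $\hat h(P)<\tfrac23\log D(t)+1.2177$ is obtained precisely by the cancellation you describe — $\lambda_\infty(P)<\tfrac53\log D(t)+1.2177$ from the Tate/Cohen series against $\sum_p\lambda_p(P)\le-\log D(t)$ from Silverman's algorithm at the additive primes $p\mid D(t)$ — after which the ratio check yields the threshold $|t|\ge 2216$. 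The one small imprecision is your non-torsion step: the paper does this cleanly for all $|t|\ge 11$ at once by noting $y(P)\neq 0$ so $P\notin E_D(\Q)[2]$ and then invoking the positivity of the specialized lower bound, rather than a per-$t$ specialization check.
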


\begin{rem}
This family of quadratic twists 
is an example given by the method described in Section \ref{sec:fam_quad}. 
For many other families given by the method, 
we can show similar results. 
\end{rem}

The organization of this paper is as follows. 
In Section \ref{sec:preliminaries} 
we review the notions of 
the canonical height and the local height function. 
In Section \ref{sec:fam_quad} 
we introduce a method of constructing 
families of quadratic twists. 
In Section \ref{sec:unif lower} 
we compute the local height functions 
by using Cohen's algorithm and 
Silverman's algorithm 
to prove Theorem \ref{main}. 
In Section \ref{sec:comp-canon-height} 
we prove Theorem \ref{thm:D013}, which 
is a consequence for an example given by 
the method in Section \ref{sec:fam_quad}. 
%
%
%The organization of this paper is as follows. 
%In Section \ref{sec:preliminaries} 
%we review basic notations of elliptic curves. 
%We also review the canonical height and the local height function. 
%In Section \ref{sec:comp-canon-height} we compute bounds of 
%the canonical heights of $P_1, P_2, P_3$. 
%In Section \ref{sec:unif lower} 
%we compute an uniform lower bound of the canonical height. 
%In Section \ref{sec:lattice index} we estimate the lattice indices 
%by applying Siksek's theorem to the results of 
%Sections \ref{sec:comp-canon-height} and \ref{sec:unif lower}. 
%In Section \ref{sec:indep} 
%we prove that the lattice indices are not divisible by 2, 3 
%by an argument of the descent. 
%Then we complete the proof of Theorem \ref{main}. 
%Further we prove that the family of the elliptic curves 
%satisfying the condition of Theorem \ref{main} 
%is an infinite family. 
%Finally in Section \ref{sec:uni z'} we compute 
%the bounds of $z(P)$, which are used in Section \ref{sec:comp-canon-height}. 
%

\section{Preliminaries}
\label{sec:preliminaries}
Let $E$ be an elliptic curve 
$y^2+a_1xy+a_3y=x^3+a_2x^2+a_4x+a_6$. 
Throughout this paper $b_2,b_4,b_6$ and $c_4,c_6$ 
are the usual quantities defined in \cite[III.1]{aec}. 
Further by $\Delta$, 
we denote the discriminants of $E$. 
If we have to specify the elliptic curve, 
we may use the notation such as $\Delta_E$. 

First, we define the notion of 
the canonical height of elliptic curves. 
Let $E/\Q$ be an elliptic curve and $P=(x,y) \in E(\Q)$ 
with $x=n/d$ and $\gcd(n,d)=1$. % and $d>0$. 
Then the na\"{i}ve height $h(P)$ is defined by 
$\max \{\log |n|, \log |d|\}$ 
and the canonical height $\hat{h}(P)$ is defined by 
\begin{equation*}
\hat{h}(P)=\lim_{n \rightarrow \infty} \frac{h(2^nP)}{4^n}. 
\end{equation*}

%
%\begin{rem}
%In our definition the value of $\hat{h}$
%is twice of those 
%in \cite{aec}, \cite{cohen0} and \cite{sil1}.
%%
%%
%%In our definition the value of $\hat{h}$
%%is twice of those 
%%in \cite{aec}, \cite{cohen0} and \cite{sil1}.
%%
%%
%\end{rem}
%

It is known that the canonical height 
is decomposed to the sum of functions, 
called the local height functions. 
We use the decomposition for computations of the canonical heights. 
The local height function 
${\lambda}_v$ is defined by the following theorem. 
\begin{thm}
$($N$\acute{e}$ron, Tate, \cite[p. 341]{sil1}$)$
\label{thm:Neron}
Let $K$ be a number field, $v$ a place and 
$K_v$ its completion with respect to 
the absolute value $|\cdot|_v$.
Let $E/K$ be the elliptic curve 
$y^2+a_1xy+a_3y=x^3+a_2x^2+a_4x+a_6$. 
Then there exists a unique function 
${\lambda}_v : E(K_v)\setminus{O} \rightarrow \R$
which has the following three properties. 
\begin{itemize}
\item[(1)]
For all P $\in E(K_v)$ with $2P \neq O$, 
\begin{equation*} 
{\lambda}_{v}(2P)=4{\lambda}_{v}(P)-
2\log |2y(P)+a_1x(P)+a_3|_v. 
\end{equation*}
\item[(2)]
The limit 
\begin{math} 
\lim_{\substack{P\rightarrow O \\ \text{$v$-adic}}}
({\lambda}_v(P)-\log |x(P)|_v) 
\end{math}
exists.  
\item[(3)]
${\lambda}_v$ is 
bounded on any $v$-adic open subset of $E(K_v)$ disjoint from $O$.
\end{itemize}
\end{thm}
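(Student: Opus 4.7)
The plan is to prove uniqueness and existence separately, in the classical style of N\'eron and Tate.

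\textbf{Uniqueness.} Suppose $\lambda_v$ and $\lambda'_v$ both satisfy (1)--(3). Their difference $f = \lambda_v - \lambda'_v$ satisfies $f(2P) = 4 f(P)$ by (1), is bounded on any $v$-adic neighborhood disjoint from $O$ by (3), and extends continuously across $O$ by (2) (the singular $\log |x(P)|_v$ contributions cancel in the subtraction). Hence $f$ is continuous, and therefore bounded, on the compact space $E(K_v)$. For any non-torsion $P$, iterating the functional equation gives $|f(P)| = 4^{-n}|f(2^n P)| \le 4^{-n}\|f\|_\infty \to 0$, so $f(P)=0$, and continuity then gives $f \equiv 0$.

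\textbf{Existence.} I would build $\lambda_v$ by a telescoping construction. Choose any continuous $g : E(K_v) \setminus \{O\} \to \R$ for which $g(P) - \log |x(P)|_v$ extends continuously to $O$; a convenient choice is $g(P) = \tfrac{1}{2}\log\max\{|x(P)|_v,1\}$ with an explicit local correction near $O$. Define the defect
$$\delta(P) = g(2P) - 4 g(P) + 2\log |2y(P) + a_1 x(P) + a_3|_v,$$
which measures how far $g$ is from obeying (1). The key claim is that $\delta$ extends to a continuous, hence bounded, function on all of $E(K_v)$: near $O$ the logarithmic pole of $g$ combines with that of $g(2P)$ so as to leave a regular quantity, and near the $2$-torsion the pole of $\log|2y+a_1 x + a_3|_v$ is cancelled by the corresponding zero in the duplication formula for $x(2P)$. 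Granted this, set
$$\lambda_v(P) = g(P) + \sum_{n=0}^{\infty} 4^{-n-1}\,\delta(2^n P),$$
which converges uniformly because $\delta$ is bounded. A telescoping rearrangement of the defining series yields $\lambda_v(2P) - 4\lambda_v(P) = -2\log|2y+a_1 x+a_3|_v$, which is (1); property (2) drops out from the defining property of $g$ and uniform convergence; property (3) follows from continuity of each $\delta(2^n P)$ together with the uniform bound.

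\textbf{Main obstacle.} The technical heart is verifying that the defect $\delta$ extends continuously across $O$ and across $E[2]$, which requires a careful local analysis of the duplication formula there. At non-archimedean places this can be packaged as an intersection-theoretic computation on the N\'eron model of $E/\mathcal{O}_{K_v}$. At archimedean places one can instead bypass the telescoping construction altogether and define $\lambda_v$ directly via the Weierstrass $\sigma$-function as $\lambda_v(P) = -\log|\sigma(z)|_v + (\text{quadratic correction in }z)$, where $P$ corresponds to $z \in \mathbb{C}/\Lambda$; the three properties then follow from the classical quasi-periodicity of $\sigma$ and the addition formula.
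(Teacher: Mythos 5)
The paper does not prove Theorem~\ref{thm:Neron}: it is cited directly from Silverman's 1988 paper as a classical result of N\'eron and Tate, so there is no in-paper argument to compare against. Your proposal is the standard telescoping construction of Tate, and the overall outline (rough function $g$, bounded defect $\delta$, geometric series for existence; boundedness plus the duplication relation for uniqueness) is the right one. Two points deserve attention, though.

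First, there is a real gap in the uniqueness argument. You assert that $f=\lambda_v-\lambda'_v$ is ``continuous, and therefore bounded,'' but property (3) as stated gives only \emph{boundedness} on $v$-adic opens away from $O$ (and property (2) gives boundedness near $O$ after subtracting the pole, which cancels in $f$), not continuity. Boundedness is all the iteration $|f(P)| = 4^{-n}|f(2^n P)|$ needs, and this forces $f(P)=0$ whenever $2^nP\neq O$ for all $n$. But to conclude $f\equiv 0$ on the $2$-power torsion you invoke continuity, which you have not established. If $T$ is a nonzero $2$-torsion point of $E(K_v)$ with no half in $E(K_v)$, then (1) never touches the value $\lambda_v(T)$, so some further argument (density plus an a-priori continuity of $\lambda_v$, or the quasi-parallelogram law, or the explicit intersection-theoretic / $\sigma$-function formula) is needed to pin it down. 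This is precisely the subtlety that Silverman's and Lang's fuller treatments include a continuity clause for.

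Second, the suggested rough function $g(P)=\tfrac12\log\max\{|x(P)|_v,1\}$ does not satisfy the constraint you impose on it: here $g(P)-\log|x(P)|_v=-\tfrac12\log|x(P)|_v\to-\infty$ as $P\to O$, so it does not extend continuously. The factor $\tfrac12$ matches Silverman's normalization, not the one in this paper (Remark~\ref{rem:shift}: the paper's $\hat h$, hence $\lambda_v$, is twice Silverman's, which is why (2) has $\log|x|_v$ rather than $\tfrac12\log|x|_v$); you want $g(P)=\log\max\{|x(P)|_v,1\}$. The ``local correction near $O$'' you allude to is then not a small adjustment but a doubling of the leading term, so it is better to start from the correctly normalized $g$. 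With these two repairs the sketch is a faithful rendering of the classical argument.
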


%The function ${\lambda}_v$ above is called the 
%{\emph {local height function}}.  
\begin{rem}
\label{rem:shift}
There is an alternative definition of the local height function, 
which is given by adding $\frac12 \log|\Delta|$ 
on the right hand side of (1) 
(\cite[Chapter VI, Theorem 1.1]{aecadv}). 
This alternative local height function is independent 
of the Weierstrass equation. 
With our definition, 
the local height function 
depends on the Weierstrass equation, 
but the function does not change 
by the substitution $x \mapsto x+r$ (see \cite[Lemma 2.11]{fn1}), 
which corresponds to the shift of the Weierstrass model 
in the direction of $x$-axis. 

The definition of the local height function in 
Cohen's algorithm (\cite[Algorithm 7.5.7]{cohen0}), 
which we shall use later in this paper, 
agrees with ours except for the multiplication by 1/2. 
\end{rem}
%
%
%
%If we have to 
%specify the elliptic curve, we may use the notation 
%such as ${\lambda}_{E,v}$.  
%
%
%
Now if $K=\Q$ we have the decomposition 
\begin{align}
\label{decomp}
\hat{h}(P)=\sum_{p:\op{prime},\infty}{\lambda}_p(P). 
\end{align}

%\subsection{Non-archimedean part}
%
%
%In this subsection 
%we compute the
%non-archimedean part of the canonical height, which was required 
%in the proof of Proposition \ref{prop1}. 
%To do this, we use \cite[THEOREM 5.2]{sil1}. 
%The Weierstrass equation of the elliptic curve 
%to which we apply this theorem 
%needs to be minimal at $p$ to compute ${\lambda}_p$. 
%Let $n \in \Z$ be sixth power free 
%and $E$ the elliptic curve $y^2=x^3+n$. 
%Then the Weierstrass equation of $E$ 
%is global minimal if and only if $n\not \equiv 16\pmod {64}$ 
%(\cite[Corollary 5.6.4]{connel1}). 
%%Since in Theorem \ref{main} we assume that $a^6+16b^6$ is square-free, 
%%we may assume that $n\not \equiv 16\pmod {64}$, 
%%and that $y^2=x^3+n$ is global minimal. 
%%Further we may assume that $v_2(n)=0$, $v_3(n)=0$. 
%%The reason is explained by the next lemma. 
%Therefore if $n$ is square-free, $E$ is global minimal.

\section{Families of quadratic twists}
\label{sec:fam_quad}
In this section we describe a method to construct 
families of quadratic twists of elliptic curves 
with an explicit point. 

Let $f \in \Z[t]$ be a monic irreducible cubic polynomial 
(therefore with no multiple roots), 
$F \in \Z[t]$ a polynomial such that $F'=mf$ for some $m\in \Z$ 
and 
$\alpha$ a root of $f$. 
%Then $F(\alpha) \not \in \Q$, since $f$ has no repeated roots. 
The minimal polynomial of $F(\alpha)$ over $\Q$ 
is a cubic polynomial, which is denoted by $f_1$. 
Then $f_1 \circ F(t)$ has the factor $f(t)^2$, since 
$f_1 \circ F(\alpha)=0$ and $\cfrac{d (f_1\circ F)}{dt}(\alpha)$ 
$=f_1'(F(\alpha))F'(\alpha)$$=0$. 
Therefore, there exists a polynomial $D(t)$ 
such that  $D(t)f(t)^2=f_1(F(t))$. 

For example, if 
\begin{equation*}
f=t^3+t+3, \quad F=t^4+2t^2+12t,
\end{equation*}
we have 
\begin{equation*}
f_1(x)=x^3+2x^2+163x+2205, \quad
D(t)={t}^{6}+4\,{t}^{4}+30\,{t}^{3}+5\,{t}^{2}+54\,t+245
.\end{equation*}
So we have the quadratic twist $D(t)y^2=f_1(x)$ 
of the elliptic curve $y^2=f_1(x)$, 
and it has the obvious rational point $(F(t),f(t))$. 
%The Weierstrass equation of this 
%is $y^2=x^3+2D(t)x^2+163D(t)^2x+2205D(t)^3$ 
%with the point $(D(t)F(t), D(t)^2f(t))$. 

If $h$ is a polynomial, we denote its
discriminant by $\op{disc}(h)$.

\begin{lem}
Let $A,B\in\Z$, $f=t^3+At+B$ and $F=t^4+2At^2+4Bt$.
Then the polynomials 
$f_1$ and $D$ as above are as follows. 
\begin{align*}
f_1=t^3+2A^2t^2+A(A^3+18B^2)t+B^2(2A^3+27B^2), \\
D=t^6+4At^4+10Bt^3+5A^2t^2+18ABt+2A^3+27B^2. 
\end{align*}
In particular, $\op{disc}(f_1)=B^2 \op{disc}(f)^3$. 
\end{lem}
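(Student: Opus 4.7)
The plan is to carry out the construction from the preceding section for the specific pair $(f,F)$ and then verify the discriminant formula by a symmetric-function argument.

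First, $F'(t)=4t^3+4At+4B=4f(t)$, so $m=4$ and the construction applies. For a root $\alpha$ of $f$, reducing modulo $\alpha^3=-A\alpha-B$ gives
\begin{equation*}
\beta := F(\alpha)=A\alpha^2+3B\alpha.
\end{equation*}
Expanding $\beta^2$ and $\beta^3$ in the basis $\{1,\alpha,\alpha^2\}$ of $\Q[\alpha]$ by repeated substitution, the four elements $1,\beta,\beta^2,\beta^3$ must satisfy a unique monic linear relation over $\Q$; solving the resulting $3\times 3$ system yields the coefficients $2A^2$, $A(A^3+18B^2)$, $B^2(2A^3+27B^2)$ of the stated $f_1$. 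For $D$, observe that $F'=4f$ and $f$ is separable, so $f_1\circ F$ and its derivative both vanish at every root of $f$; hence $f^2\mid f_1\circ F$, and the degree count $12-6=6$ shows $\deg D=6$. The six coefficients of $D$ can then be read off by long division, or more systematically by matching coefficients of $D(t)f(t)^2 = f_1(F(t))$.

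For the discriminant identity, let $\alpha_1,\alpha_2,\alpha_3$ be the roots of $f$ and set $\beta_i=F(\alpha_i)=A\alpha_i^2+3B\alpha_i$. Then
\begin{equation*}
\beta_i-\beta_j = (\alpha_i-\alpha_j)\bigl(A(\alpha_i+\alpha_j)+3B\bigr).
\end{equation*}
Since $f$ has no $t^2$ coefficient, $\alpha_1+\alpha_2+\alpha_3=0$, so $A(\alpha_i+\alpha_j)+3B = -A\alpha_k+3B$ with $\{i,j,k\}=\{1,2,3\}$. Taking the product of squares over pairs gives
\begin{equation*}
\op{disc}(f_1) = \op{disc}(f)\cdot\prod_{k=1}^{3}(-A\alpha_k+3B)^2,
\end{equation*}
and the inner product equals $A^3 f(3B/A) = B(4A^3+27B^2) = -B\,\op{disc}(f)$ (using $\op{disc}(f)=-4A^3-27B^2$). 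Squaring and combining yields $\op{disc}(f_1)=B^2\op{disc}(f)^3$.

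The main obstacle is purely computational: the reduction of $\beta^3$ in the basis $\{1,\alpha,\alpha^2\}$ and the division of $f_1(F(t))$ by $f(t)^2$ together amount to a page of symbolic manipulation, though no new idea is required. The only step where structure is genuinely used is the discriminant computation, where $\sum\alpha_i=0$ collapses the pairwise products to a single resultant evaluation.
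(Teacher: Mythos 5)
Your argument is correct, and it takes a genuinely different route from the paper's. The paper writes $f_1(t)=\prod_i(t-F(\alpha_i))$ and observes that the three elementary symmetric functions $\sum F(\alpha_i)$, $\sum_{i<j} F(\alpha_i)F(\alpha_j)$, $\prod_i F(\alpha_i)$ are symmetric in the $\alpha_i$, hence polynomials in $e_1=0$, $e_2=A$, $e_3=-B$; it then simply evaluates them and reads off $f_1$, with no further argument for $D$ or for the discriminant identity. You instead reduce $F(\alpha)$ to $A\alpha^2+3B\alpha$ modulo $\alpha^3=-A\alpha-B$, find the minimal polynomial of $\beta=F(\alpha)$ by solving a linear system in the basis $\{1,\alpha,\alpha^2\}$, and then obtain $D$ by dividing $f_1\circ F$ by $f^2$. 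Both give $f_1$, but the Vieta route is slightly cleaner there; your real contribution is the discriminant identity, where the paper gives no proof at all. The factorization $\beta_i-\beta_j=(\alpha_i-\alpha_j)(A(\alpha_i+\alpha_j)+3B)$ together with $\sum\alpha_i=0$ collapses the second factor to $-A\alpha_k+3B$, and the product $\prod_k(-A\alpha_k+3B)=B(4A^3+27B^2)=-B\,\op{disc}(f)$ is exactly the resultant of $f$ with $3B-At$; squaring then gives $\op{disc}(f_1)=B^2\op{disc}(f)^3$. This is a structural proof rather than a brute-force evaluation of the cubic discriminant in terms of the coefficients of $f_1$, and it is a worthwhile addition. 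One tiny remark: the intermediate expression $A^3 f(3B/A)$ is ill-defined when $A=0$, but the identity $\prod_k(3B-A\alpha_k)=B(4A^3+27B^2)$ is a polynomial identity in $A,B$ and therefore holds for $A=0$ by continuity; and the step invoking ``a unique monic linear relation'' among $1,\beta,\beta^2,\beta^3$ implicitly uses that $\beta$ generates a cubic field, which holds because $f$ is assumed irreducible in the ambient section (if $\beta\in\Q$ one would have $A=B=0$, contradicting irreducibility). Neither issue affects the correctness.
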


\begin{proof}
If we write $f(t)=(t-\alpha_1)(t-\alpha_2)(t-\alpha_3)$, 
then 
\begin{equation*}
f_1(t)=(t-F(\alpha_1))(t-F(\alpha_2))(t-F(\alpha_3)). 
\end{equation*}
Since 
\begin{align*}
& F(\alpha_1)+F(\alpha_2)+F(\alpha_3), \\
& F(\alpha_1)F(\alpha_2)+F(\alpha_2)F(\alpha_3)+F(\alpha_3)F(\alpha_1), \\
& F(\alpha_1)F(\alpha_2)F(\alpha_3) 
\end{align*}
are all symmetric polynomials of 
$\alpha_1, \alpha_2, \alpha_3$, they are polynomials of 
$\alpha_1+ \alpha_2+ \alpha_3 \, (=0)$, 
$\alpha_1\alpha_2+\alpha_2\alpha_3+\alpha_3\alpha_1 \, (=A)$ and 
$\alpha_1\alpha_2\alpha_3 \, (=-B)$. 
It is easy to verify that 
\begin{align*}
& F(\alpha_1)+F(\alpha_2)+F(\alpha_3)=-2A^2, \\
& F(\alpha_1)F(\alpha_2)+F(\alpha_2)F(\alpha_3)+F(\alpha_3)F(\alpha_1)
=A(A^3+18B^2), \\
& F(\alpha_1)F(\alpha_2)F(\alpha_3)=-B^2(2A^3+27B^2). 
\end{align*}

\end{proof}

\begin{rem}
%Since $D(t)f(t)^2=f_1(F(t))$, 
Let $E'$ be the elliptic curve  $y^2=f_1(x)$. 
Then 
\begin{equation*}
\Delta_{E'}=16\op{disc}(f_1)=16B^2\op{disc}(f)^3
=16B^2(-4A^3-27B^2)^3
.\end{equation*}
So for example, if $B$ is odd, 
$\gcd(A,B)=1$ and $\op{disc}(f)$ is square-free, 
then Theorem \ref{main} is applicable to 
$Dy^2=f_1(x)$. 
\end{rem}

\section{Uniform lower bound on quadratic twists}
\label{sec:unif lower}

In this section we compute a lower bound of the canonical height 
on quadratic twists of elliptic curves. 
%
%
%
%
%
%main
%
%
%
%
%
%
We use the decomposition (\ref{decomp}). 

Consider an elliptic curve of the form
\begin{equation}
\label{eq:elliptic-curve-main}
E:y^2=x^3+a_2x^2+a_4x+a_6
\end{equation}
where $a_2,a_4,a_6 \in\Z$ (the point is that $a_1=a_3=0$). 
For a square-free integer $D$ 
we put 
\begin{equation}
\label{eq:E_D}
E_D : y^2=x^3+a_2Dx^2+a_4D^2x+a_6D^3. 
\end{equation}
Throughout this section, 
by $\omega_1$ and $\omega_2$ 
we denote 
the periods of $E$ 
such that $\omega_1>0,\ 
\op{Im}(\omega_2)>0$ and $\op{Re}(\omega_2/\omega_1)=0$ or $-1/2$ 
and put $q=\exp (2\pi i \omega_2/\omega_1)$. 
The periods, discriminant and the usual quantities of $E_D$ are 
denoted by 
%the similar symbols suffixed by $D$, such as 
$\omega_{1,D}$, $\Delta_D$, $a_{i,D}$, $b_{i,D}$ and $c_{i,D}$. 

Straightforward computations using 
\cite[Algorithm 7.4.7]{cohen0} show
the following lemma. 
\begin{lem}
We have %$\Delta_D$ = $\Delta D^6$ and 
$\omega_{1,D}=\omega |D|^{-1/2}$, 
where 
\begin{eqnarray*}
\omega=
\left\{ \begin{array}{ll}
\omega_1  & (D>0) \\
\im(\omega_2) & (D<0, \Delta>0) \\
2\im(\omega_2) & (D<0, \Delta<0) 
\end{array} \right..
\end{eqnarray*}
\end{lem}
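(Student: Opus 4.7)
The plan is to exploit the fact that $E_D$ and $E$ become isomorphic over $\Q(\sqrt{D})$ via $(x,y)\mapsto(x/D,\,y/D^{3/2})$, which pulls back the invariant differential $dx/y$ on $E$ to $D^{-1/2}\,dx/y$ on $E_D$. Equivalently, one checks $c_{4,D}=D^2c_4$ and $c_{6,D}=D^3c_6$, so by the homogeneity $g_2(\mu\Lambda)=\mu^{-4}g_2(\Lambda)$, $g_3(\mu\Lambda)=\mu^{-6}g_3(\Lambda)$ of the Eisenstein series, the period lattice $\Lambda_{E_D}$ of $E_D$ satisfies $\Lambda_{E_D}=D^{-1/2}\Lambda_E$. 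This is exactly the scaling that Cohen's Algorithm 7.4.7 records when one feeds it the cubic defining $E_D$ in place of that of $E$.

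From here the argument splits by the sign of $D$. If $D>0$, then $D^{-1/2}$ is a positive real scalar and $\Lambda_{E_D}$ is a mere rescaling of $\Lambda_E$, so its least positive real period is $\omega_1/\sqrt{|D|}$; this matches the case $\omega=\omega_1$.

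If $D<0$ I fix $D^{-1/2}=i/\sqrt{|D|}$, so multiplication by $D^{-1/2}$ rotates $\Lambda_E$ by $\pi/2$ while shrinking it by $1/\sqrt{|D|}$. When $\Delta>0$ the lattice $\Lambda_E$ is rectangular with $\omega_2=i\,\op{Im}(\omega_2)$; the rotation turns that imaginary generator into a real one of length $\op{Im}(\omega_2)/\sqrt{|D|}$, yielding $\omega=\op{Im}(\omega_2)$. When $\Delta<0$ the lattice is centered-rectangular with $\omega_2=-\omega_1/2+i\,\op{Im}(\omega_2)$; a short bookkeeping calculation on the rotated generators $A=i\omega_1/\sqrt{|D|}$, $B=i\omega_2/\sqrt{|D|}$ shows that an integer combination $nA+mB$ is real if and only if $m$ is even, so the smallest positive real period is $2\op{Im}(\omega_2)/\sqrt{|D|}$, yielding $\omega=2\op{Im}(\omega_2)$.

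The only place I expect any real care to be needed is this last case: one must verify that the integrality constraint forces $m\in 2\Z$, else one would be tempted to halve the real period. Everything else follows immediately from the scaling $\Lambda_{E_D}=D^{-1/2}\Lambda_E$ and the normalization conventions on $\omega_2$.
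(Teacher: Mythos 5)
Your proof is correct. The paper offers no written proof at all—it merely asserts that the lemma follows from "straightforward computations using Cohen's Algorithm 7.4.7"—and your lattice-scaling argument ($\Lambda_{E_D}=D^{-1/2}\Lambda_E$ via the pullback of the invariant differential, followed by the real/rectangular versus centered-rectangular case analysis) is precisely the conceptual content that such a computation verifies. You correctly flag the one delicate step: in the $D<0$, $\Delta<0$ case the condition $\mathrm{Re}(\omega_2/\omega_1)=-1/2$ forces the coefficient $m$ on $\omega_2$ to be even before $nA+mB$ can be real, which is what produces the factor $2$ in $\omega=2\,\mathrm{Im}(\omega_2)$; and consistency is guaranteed because $\Delta_D=D^6\Delta$ has the same sign as $\Delta$, so the lattice type of $E_D$ matches that of $E$.
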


%
%\begin{rem}
%Similarly it is easy to see 
%\begin{eqnarray*}
%\omega_{1,D}=
%\left\{ \begin{array}{ll}
%\im(\omega_2) |D|^{-1/2} & (\Delta>0) \\
%2\im(\omega_2) |D|^{-1/2} & (\Delta<0) \\
%%(-D)^{-1/2}\omega_1 & (D<0,\ \Delta>0) \\
% \end{array} \right.,
%\end{eqnarray*}
%if $D$ is negative. 
%
%\end{rem}
%

We first consider the archimedean part, 
that is ${\lambda}_{\infty}$. 
Points in $E_D(\Q)$ can always be made into the form 
$(\alpha/\delta^2,\beta/\delta^3)$, where 
$\alpha,\beta,\delta \in \Z$, 
$\delta>0$ and 
$\gcd(\alpha,\delta)=\gcd(\beta,\delta)=1$. 
So in this sectition we always assume the above condition on 
$\alpha, \beta$ and $\delta$. 
\begin{lem}
\label{lem:arc}
Let $Q=(\alpha/\delta^2, \beta/\delta^3) 
\in E_D(\Q)\setminus E_D(\Q)[2]$. 
Then 
\begin{equation}
\begin{aligned}
\lambda_{\infty}(Q) & \geq \frac{1}{4}\log |D| 
+\frac{1}{16} \log \frac{(1-|q|)^8}{|q|}
+ \frac1{4}\log \left|\frac{\omega}{2\pi}\right| \\
& \quad -\frac32\log \delta 
+\frac1{2}\log |{\beta}|
+\frac{1}{16} \log |\Delta|. 
\end{aligned}
\end{equation}
\end{lem}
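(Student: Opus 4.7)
The plan is to combine Theorem \ref{thm:Neron}(1) with Cohen's Algorithm 7.5.7 applied to $E_D$. The preceding lemma provides the dictionary between the basic periods of $E_D$ and those of $E$, which makes the final bound uniform in $(\op{sign}\,D,\op{sign}\,\Delta)$.

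First I would use the defining recursion. Since $Q\notin E_D(\Q)[2]$ we have $2Q\neq O$, so Theorem \ref{thm:Neron}(1) gives
\begin{equation*}
\lambda_\infty(Q) = \tfrac14\lambda_\infty(2Q) + \tfrac12\log|2y(Q)| = \tfrac14\lambda_\infty(2Q) + \tfrac12\log 2 + \tfrac12\log|\beta| - \tfrac32\log\delta.
\end{equation*}
This already accounts for the $\tfrac12\log|\beta|-\tfrac32\log\delta$ part of the target; the spurious $\tfrac12\log 2$ is absorbed later.

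Next I would evaluate $\lambda_\infty(2Q)$ via Cohen's algorithm on $E_D$. Writing $z=r\omega_{1,D}+t\omega_{2,D}$ ($r,t\in\R$) for the elliptic logarithm of $2Q$, $w=\exp(2\pi iz/\omega_{1,D})$, and $q_D$ for the nome of $E_D$, in our doubled normalization (see Remark \ref{rem:shift}) the algorithm yields
\begin{equation*}
\lambda_\infty(2Q) = -B_2(t)\log|q_D| - 2\log|1-w| - 2\sum_{n\ge 1}\log\bigl|(1-q_D^n w)(1-q_D^n w^{-1})\bigr| + (\text{norm.\ in }\omega_{1,D},\Delta_D).
\end{equation*}
Using $\omega_{1,D}=\omega|D|^{-1/2}$ (from the preceding lemma) and $\Delta_D=D^6\Delta$, the normalization collects so that after the $\tfrac14$ of Step~1 it contributes exactly $\tfrac14\log|D|+\tfrac14\log|\omega/(2\pi)|+\tfrac{1}{16}\log|\Delta|$. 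The remaining Bernoulli and theta factors are bounded crudely via $|B_2(t)|\le\tfrac16$ on $[0,1]$, $|1-w|\le 2$, and $|1-q_D^n w|,|1-q_D^n w^{-1}|\ge 1-|q_D|$; together with the three-case relation between $q_D$ and $q$ supplied by the preceding lemma this packages into the $\tfrac{1}{16}\log((1-|q|)^8/|q|)$ factor, absorbing the $\tfrac12\log 2$ from Step~1 via $|1-w|\le 2$.

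The main obstacle will be the sign-case analysis: when $D<0$ the lattice of $E_D$ is a rotation of that of $E$ by $i/\sqrt{|D|}$, so $q_D\neq q$, and the three definitions of $\omega$ in the preceding lemma are engineered precisely so that the crude theta bound can be expressed uniformly in $|q|$. A secondary subtlety is coefficient matching: the raw bounds $|B_2|\le\tfrac16$ and $|1-w|\le 2$ are each slack on their own, so the argument likely requires a finer coupling between the Bernoulli term and $-2\log|1-w|$ (via the real-lattice identity $|1-w|=2|\sin(\pi t)|\cdot|q_D|^{t/2}$) to shift weights between them and produce the precise $(1-|q|)^8$ and $-\tfrac{1}{16}\log|q|$ exponents of the target.
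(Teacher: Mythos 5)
Your route is genuinely different from the paper's, and as sketched it does not close. The paper applies Cohen's Algorithm 7.5.7 \emph{directly to $Q$}, obtaining in one step
\begin{equation*}
\lambda_{\infty}(Q)=\frac{1}{16} \log \left|\frac{\Delta_D}{q}\right|
+\frac1{4}\log \left|\left(\frac{\beta}{\delta^3}\right)^2
\frac{\omega_{1,D}}{2\pi}\right|
-\frac1{2}\log|\theta|,
\end{equation*}
then substitutes $\Delta_D=D^6\Delta$, $\omega_{1,D}=\omega|D|^{-1/2}$, and uses the single trivial bound $|\theta|\leq 1/(1-|q|)$; there is no duplication step and no spurious $\tfrac12\log 2$ to absorb. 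Your opening move through $2Q$ via Theorem~\ref{thm:Neron}(1) is thus an unnecessary detour that adds a term you then have to argue away.

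Two concrete gaps follow. First, you posit a ``three-case relation between $q_D$ and $q$,'' but no such relation is needed or supplied by the preceding lemma: a quadratic twist has the same $j$-invariant, hence the same $\tau$ in the fundamental domain and the same nome $q_D=q$ in all sign cases. The entire $(\op{sign}D,\op{sign}\Delta)$ dependence sits in $\omega_{1,D}=\omega|D|^{-1/2}$, which is exactly what the preceding lemma records. Second, you yourself flag that the raw bounds $|B_2(t)|\leq \tfrac16$ and $|1-w|\leq 2$ ``are each slack'' and that a ``finer coupling'' is ``likely required'' to reach the exponent $8$ in $(1-|q|)^8$; that is precisely the missing step, and it does not follow from the ingredients you list. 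The paper sidesteps the Bernoulli/$w$ decomposition entirely by invoking the compact $\theta$-form of Cohen's output, for which the uniform bound $|\theta|\leq 1/(1-|q|)$ already yields $(1-|q|)^8$ after the factor $-\tfrac12\log|\theta|\geq -\tfrac12\log\tfrac{1}{1-|q|}=\tfrac{1}{16}\log(1-|q|)^8$. You should drop the $2Q$ step and the $q_D$ discussion, and use Cohen's algorithm in the form that exposes $\theta$ directly.
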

\begin{proof}
By \cite[Algorithm 7.5.7]{cohen0} 
and the trivial bound $|\theta|\leq 1/(1-|q|)$, 
\begin{align*}
\lambda_{\infty}(Q)&=\frac{1}{16} \log \left|\frac{\Delta_D}{q}\right|
+\frac1{4}\log \left|\left(\frac{\beta}{\delta^3}\right)^2 
\frac{\omega_{1,D}}{2\pi}\right|
-\frac1{2}\log|\theta| \\
&=\frac{1}{16} \log \left|\frac{\Delta}{q}\right|
+\frac6{16}\log |D|
+\frac1{4}\log \left|\frac{\omega}{2\pi}\right|
+\frac1{4}\log \left|\frac{\beta^2}{\delta^6}\right|
-\frac14 \log |D|^{\frac12}
-\frac1{2}\log|\theta| \notag\\
&\geq \frac1{4}\log |D| 
+\frac{1}{16} \log \left|\frac{\Delta}{q}\right|
+\frac1{4}\log \left|\frac{\omega}{2\pi}\right|
+\frac1{2}\log \left|\frac{\beta}{\delta^3}\right|
-\frac1{2}\log \frac1{1-|q|} 
\notag\\
&=\frac1{4}\log |D| 
+\frac{1}{16} \log \frac{(1-|q|)^8}{|q|}
+\frac1{4}\log \left|\frac{\omega}{2\pi}\right|
-\frac32\log \delta 
+\frac1{2}\log |\beta|
+\frac{1}{16} \log |\Delta|. 
\end{align*}
\end{proof}

\begin{rem}
Note that we can not use \cite[Algorithm 7.5.7]{cohen0} 
for 2-torsion points. 
%but any 2-torsion point 
%of $E_D$ comes from a 2-torsion point of $E$ 
%by the correspondence 
%$E_D \ni (xD,0)\leftrightarrow(x,0) \in E$ over $\Q$. 
\end{rem}

To prove Theorem \ref{main}, 
we shall consider a lower  bound of 
the sum of the last two terms 
(i.e. $\frac1{2}\log |\beta|
+\frac{1}{16} \log |\Delta|$) 
and the 
non-archimedean part. 
To compute the non-archimedean part of the canonical height, 
we use Silverman's algorithm (\cite[Theorem 5.2]{sil1}). 
\begin{defn}
For an elliptic curve defined by 
$E : y^2+a_1xy+a_3y=x^3+a_2x^2+a_4x+a_6$ 
%and a point $Q=(x, y)$, 
we define polynomials of $x,y$ as follows.
%\begin{equation}
\begin{align*}
%\psi_{0}(Q)=
\psi_{0}(x,y)&=3x^2+2a_2x+a_4-a_1y, \\
%\psi_{2}(Q)=
\psi_{2}(x,y)&=2y+a_1x+a_3, \\
%\psi_{2a}(Q)=
\psi_{2a}(x,y)&=4x^3+ 2b_2x^2+b_4x+b_6, \\
%\psi_{3}(Q)=
\psi_{3}(x,y)&=3x^4+b_2x^3+3b_4x^2+3b_6x+b_8. 
\end{align*}
\end{defn}

Apart from $\psi_0$, 
they are known as the division polynomials of elliptic curves. 
For a point $Q=(x_0,y_0)$, we put $\psi_i(Q)=\psi(x_0,y_0)$. 
Note that if $Q \in E$, $\psi_2(Q)^2=\psi_{2a}(Q)$.  

Before computing the division polynomials of $E_D$, 
we compute the usual quantities 
of the Weierstrass equation of $E$ 
in (\ref{eq:elliptic-curve-main}) 
as follows.  Since $a_1=a_3=0$ in our case, 
we have 
\begin{align}
\label{Delta}
&\Delta=-16\,\left( 27\,{a}_{6}^{2}-18\,{a}_{2}\,{a}_{4}\,{a}_{6}+4\,{a}_{2}^{3}\,{a}_{6}+4\,{a}_{4}^{3}-{a}_{2}^{2}\,{a}_{4}^{2}\right), \\
&\label{c4}
c_4=-16\,\left( 3\,{a}_{4}-{a}_{2}^{2}\right), \\
&\label{c6}
c_6=-32\,\left( 27\,{a}_{6}-9\,{a}_{2}\,{a}_{4}+2\,{a}_{2}^{3}\right).
\end{align}
Note that 
\begin{align}
\label{id:Delta_c4_c6}
\Delta=1728^{-1}(c_4^3-c_6^2)=2^{-6} 3^{-3}(c_4^3-c_6^2). 
\end{align}

%
%Let 
%\begin{align*}
%E_D : y^2=x^3+a_2Dx^2+a_4D^2x+a_6D^3 
%\end{align*}
%be the quadratic twist of $E$.
%
The usual quantities of the Weierstrass equation of $E_D$ 
are as follows. 
\begin{align*}
& a_{1,D}=a_{3,D}=0, 
a_{2,D}=a_2D,\ a_{4,D}=a_4D^2,\ a_{6,D}=a_6D^3, \\
& b_{2,D}=4a_2D,\ b_{4,D}=2a_4D^2,\ b_{6,D}=4a_6D^3,\ 
b_{8,D}=(4a_2a_6-a_4^2)D^4,\\
& c_{4,D}=16(a_2^2-3a_4)D^2,\ 
c_{6,D}=-32\,( 27\,{a}_{6}-9\,{a}_{2}\,{a}_{4}+2\,{a}_{2}^{3})D^3,\ 
\Delta_D=\Delta D^6. 
\end{align*}

Using this, we have the division polynomials 
of $E_D$ as follows. 
\begin{align*}
%\psi_{0,D}(Q)=
\psi_{0,D}(x,y)&=3x^2+2a_2Dx+a_4D^2, \\
%\psi_{2,D}(Q)=
\psi_{2,D}(x,y)&=2y, \\
%\psi_{2a,D}(Q)=
\psi_{2a,D}(x,y)&=4x^3+ 2b_{2,D}x^2+b_{4,D}x+b_{6,D} \\
&=4(x^3+a_2Dx^2+a_4D^2x+a_6D^3), \\
%\psi_{3,D}(Q)=
\psi_{3,D}(x,y)&=3x^4+b_{2,D}x^3+3b_{4,D}x^2+3b_{6,D}x+b_{8,D}\\ 
&=3x^4+4a_{2}Dx^3+6a_{4}D^2x^2+12a_{6}D^3x+(4a_2a_6-a_4^2)D^4. 
\end{align*}

%Now we consider $\lambda_p$. 
%The quantities $A$, $B$, $C$ for $E_D$, $P$, $p$ 
%(defined in Silverman's algorithm) 
%are the following : 

For $E_D$, $Q (\in E_D)$ and $p$ we put 
\begin{align*}
A&=v_p(\psi_{0,D}(Q)), \\
B&=v_p(\psi_{2,D}(Q))=v_p(\psi_{2a,D}(Q))/2, \\
C&=v_p(\psi_{3,D}(Q)). 
\end{align*}
Roughly speaking, 
by comparing these values for $Q$, 
the output of Silverman's algorithm 
becomes the value of $\lambda_p(Q)$. 
%
%
%We also use the symbol $\Lambda (=\lambda_p/\log p)$, 
%$N (=v_p(\Delta))$ 
%in the algorithm. 
%Using those quantities ($A, B, C, N, \Lambda$), 
%we write Silverman's algorithm as follows.\\ \\ \\
%
%

%Before starting the computations of $\lambda_p(Q)$ 
%we show a key lemma and define some notations. 

Even though the following lemma follows from 
direct computations, 
it plays a key role in the subsequent computations. 

%
%The following lemma follows from 
%direct computations. 
%

\begin{lem}
\label{eq:id}
Let 
%\begin{align*}
%k_{2,D}&:=3{x}^{2}+2{a}_{2}Dx
%+(4{a}_{4}-{a}_{2}^{2}){D}^{2},\notag \\
%l_{2,D}&:=9{x}^{3}+9{a}_{2}D{x}^{2}
%+(21{a}_{4}-4{a}_{2}^{2}){D}^{2}x
%+(27{a}_{6}-2{a}_{2}{a}_{4}){D}^{3}. \notag
%\end{align*} 
\begin{align*}
k_{D}(x,y)&:=3{x}^{2}+2{a}_{2}Dx
+(4{a}_{4}-{a}_{2}^{2}){D}^{2},\notag \\
l_{D}(x,y)&:=9{x}^{3}+9{a}_{2}D{x}^{2}
+(21{a}_{4}-4{a}_{2}^{2}){D}^{2}x
+(27{a}_{6}-2{a}_{2}{a}_{4}){D}^{3}. \notag
\end{align*} 
Then 
%\begin{align}
%\label{id3,2a}
%-16 k_{2,D}\cdot \psi_{3,D}+4 l_{2,D}\cdot\psi_{2a,D}=-\Delta D^6. 
%\end{align}
\begin{align}
\label{id3,2a}
-16 k_{D}\cdot \psi_{3,D}+4 l_{D}\cdot\psi_{2a,D}=-\Delta D^6. 
\end{align}
\end{lem}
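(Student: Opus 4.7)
The plan is to read \eqref{id3,2a} as a polynomial identity in the ring $\mathbb{Z}[a_2, a_4, a_6, D][x]$. Since none of $k_D$, $l_D$, $\psi_{2a,D}$, or $\psi_{3,D}$ involves $y$, no appeal to the Weierstrass equation of $E_D$ is required; both sides are genuinely polynomials in $x$ alone (with $a_2, a_4, a_6, D$ as parameters), so the identity is equivalent to the vanishing of a polynomial in one variable and can be checked coefficient by coefficient.

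The first step is to reduce to the untwisted case $D = 1$ by a scaling argument. Substituting $x = DX$ into the defining formulas gives
\begin{align*}
k_D(DX) &= D^2\, k_1(X), & l_D(DX) &= D^3\, l_1(X), \\
\psi_{2a,D}(DX) &= D^3\, \psi_{2a}(X), & \psi_{3,D}(DX) &= D^4\, \psi_3(X),
\end{align*}
where the subscript $1$ denotes the specialisation $D = 1$. Consequently both products $k_D \psi_{3,D}$ and $l_D \psi_{2a,D}$ scale as $D^6$, matching the factor $D^6$ on the right hand side of \eqref{id3,2a}, so dividing through by $D^6$ reduces the task to
\begin{equation*}
-16\, k_1(x)\, \psi_3(x) + 4\, l_1(x)\, \psi_{2a}(x) = -\Delta,
\end{equation*}
which no longer involves $D$. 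The second step is then to verify this reduced identity by direct expansion: both $k_1 \psi_3$ and $l_1 \psi_{2a}$ are polynomials in $x$ of degree $6$, so it suffices to check that the coefficients of $x, x^2, \ldots, x^6$ in the stated combination vanish and that the constant term matches $-\Delta$ as given in \eqref{Delta}. A quick sanity check at the top degree, $-16 \cdot 3 \cdot 3 + 4 \cdot 9 \cdot 4 = 0$, already confirms that the degree-$6$ coefficient cancels.

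No real obstacle arises: once $k_D$ and $l_D$ have been written down (as in the statement), the remaining verification is purely mechanical. Conceptually, the identity is a B\'ezout-type relation certifying that $\Delta$ lies in the ideal $(\psi_{2a}, \psi_3) \subset \mathbb{Z}[a_2, a_4, a_6][x]$; such a relation is to be expected since $\psi_{2a}/4 = x^3 + a_2 x^2 + a_4 x + a_6$ is the defining cubic of $E$, and a common zero of it with $\psi_3$ in $\overline{\mathbb{Q}}$ would force $E$ to be singular, i.e.\ $\Delta = 0$. The creative content—producing the explicit multipliers $k_D$ and $l_D$—has already been supplied by the statement, so the proof plan is simply: reduce to $D=1$ by homogeneity, then compare coefficients.
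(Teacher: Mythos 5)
Your proposed proof is correct; the paper itself gives no explicit argument beyond remarking that the lemma ``follows from direct computations,'' and your coefficient comparison is exactly that verification. The preliminary reduction to $D=1$ via the scaling $x = DX$ (using the visible weighted homogeneity $k_D \mapsto D^2 k_1$, $l_D \mapsto D^3 l_1$, $\psi_{2a,D} \mapsto D^3 \psi_{2a}$, $\psi_{3,D} \mapsto D^4 \psi_3$) is a sensible way to organise the check, and your B\'ezout-relation interpretation is a nice conceptual gloss, but neither changes the essential nature of the argument.
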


In the following consideration, we fix a 
square-free integer $D$ and a rational point
$Q=(x_0,y_0)=(\alpha/\delta^2, \beta/\delta^3)\in E_D(\Q)$.

\begin{defn}
\label{omega}
Let $\Omega$ be the set of all the rational primes. 
We put 
\begin{align*}
&\Omega^+=\{p\in \Omega ; p|\delta\}, \ 
\Omega^-=\{p\in \Omega ; p{\not|} \delta\}, \\
&\Omega_1=\{p\in \Omega^-\setminus\{2,3\} ; p{\not|} \Delta, p{\not|} D \}, \ 
\Omega_2=\{p\in \Omega^-\setminus\{2,3\} ; p{\not|} \Delta, p{|} D \}, \\
&\Omega_3=\{p\in \Omega^-\setminus\{2,3\} ; p{|} \Delta, p{\not|} D \}, \ 
\Omega_4=\{p\in \Omega^-\setminus\{2,3\} ; p{|} \Delta, p{|} D \}. 
%
%&\Omega^+=\{p\in \Omega ; p|\delta\}, \ 
%\Omega^-=\{p\in \Omega ; p{\not|} \delta\}\setminus\{2,3\}, \\
%&\Omega_1=\{p\in \Omega^- ; p{\not|} \Delta, p{\not|} D \}, \ 
%\Omega_2=\{p\in \Omega^- ; p{\not|} \Delta, p{|} D \}, \\
%&\Omega_3=\{p\in \Omega^- ; p{|} \Delta, p{\not|} D \}, \ 
%\Omega_4=\{p\in \Omega^- ; p{|} \Delta, p{|} D \}. 
\end{align*}

\end{defn}
%

%\begin{rem}
%if $2,3 \not \in \Omega^+$, 
%If $p \in \Omega^+$, then $\lambda_p(Q)\geq0$ 
%and otherwise, then $\lambda_p(Q)\leq0$. 
%
If $p \not\in \Omega^+$, 
then $v_p(x_0) \geq 0$ and so 
\begin{equation*}
v_p(k_D(x_0,y_0)) \geq 0, \quad
v_p(l_D(x_0,y_0))\geq 0, \quad
v_p(\psi_{i,D}(x_0,y_0)) \geq 0.
\end{equation*}
%
%
%$\Omega^-=\bigoplus_{i=1}^4 \Omega_i$. 
%\begin{equation}
%\hat{h}(P)
%=\lambda_{\infty}(P)
%+\sum_{p \in \Omega^+ \cup \Omega_1 \cup \Omega_2 \cup \Omega_3 
%\cup \Omega_4 \cup \{2,3\}}\lambda_p(P) .
%
%\end{equation}
%\end{rem}
To ease the notations, 
we put 
\begin{align*}
&k_D=k_D(Q)=k_D(\alpha/\delta^2,\beta/\delta^3), \\
&l_D=l_D(Q)=l_D(\alpha/\delta^2,\beta/\delta^3), \\
&\psi_{i,D}=\psi_{i,D}(Q)=\psi_{i,D}(\alpha/\delta^2,\beta/\delta^3). 
\end{align*}

\begin{defn}
\begin{itemize}
\item[(1)] 
For a set of primes ${S}$ and an integer $m$, we define 
$m_{{S}} = \prod_{p \in {S}} p^{v_p(m)}$. 
\item[(2)] 
We put $\Lambda=\lambda_p(Q)/\log p$ and $N=v_p(\Delta)$
(here we are considering $\Del=\Del_E$ and not $\Del_D$). 
\end{itemize}
\end{defn}

Now we compute $\lambda_p(Q)$ 
using \cite[Theorem 5.2]{sil1} 
in Lemmas \ref{lem:o+}--\ref{lem:p=3}. 
Recall in our definition the value of $\hat{h}$ 
is twice of that in \cite{sil1}, and so is $\lambda_p$. 
We assume that $\Delta$ is 6th-power-free. 
So (\ref{eq:E_D}) is a minimal Weierstrass equation 
at every prime $p$ by \cite[VII, Remark 1.1]{aec}. 
\begin{lem}
\label{lem:o+}
If $p \in \Omega^+$ and $Q=(\alpha/\delta^2,\beta/\delta^3)$
$\in E_D(\Q)$, 
then $\sum_{p\in \Omega^+}\lambda_p (Q) 
= 2 \log \delta$. 
\end{lem}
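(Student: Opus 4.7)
The plan is to show $\lambda_p(Q) = 2v_p(\delta)\log p$ for each $p \in \Omega^+$ and then sum. First I would observe that since $\gcd(\alpha,\delta) = \gcd(\beta,\delta) = 1$, any prime $p \mid \delta$ satisfies $v_p(x_0) = -2v_p(\delta) < 0$ and $v_p(y_0) = -3v_p(\delta) < 0$, so $Q$ lies in the kernel of reduction of $E_D$ modulo $p$. In this ``close to $O$'' regime the leading $x$-powers of $\psi_{0,D}, \psi_{2,D}, \psi_{3,D}$ dominate $p$-adically (recall $D, a_i \in \Z$), yielding explicit negative values for the quantities $A$, $B$, $C$ defined just above the lemma.

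Next I would feed $(A,B,C,N)$ into Silverman's algorithm \cite[Theorem 5.2]{sil1}, which applies since (\ref{eq:E_D}) is a minimal Weierstrass equation at every prime by the 6th-power-free assumption on $\Delta$ together with \cite[VII, Remark 1.1]{aec}. A branch check shows that the algorithm terminates in the ``near $O$'' case, producing
\[
\lambda_p(Q) = -v_p(x_0)\log p = 2v_p(\delta)\log p
\]
in the doubled normalization used in this paper (cf.\ Remark \ref{rem:shift}). Summing over $p \in \Omega^+$ then gives
\[
\sum_{p \in \Omega^+}\lambda_p(Q) \;=\; \sum_{p \mid \delta} 2v_p(\delta)\log p \;=\; 2\log\delta.
\]

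The main obstacle is verifying that the ``near $O$'' branch of Silverman's case analysis is indeed triggered for every $p \mid \delta$---including $p=2,3$, where $v_p(2)$ and $v_p(3)$ shift $A$, $B$, $C$ slightly, and primes $p \in \Omega^+$ with $p \mid \Delta$, where in principle the algorithm could enter a Kodaira-type computation. A short verification of the relevant inequalities among $A$, $B$, $C$, $N = v_p(\Delta_D)$ confirms that the correct branch is selected in each case, so the formula $\lambda_p(Q) = -v_p(x_0)\log p$ applies uniformly. The factor-of-two normalization between this paper's $\lambda_p$ and that of \cite{sil1} must also be tracked carefully to land on the right final coefficient.
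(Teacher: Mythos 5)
Your proof is correct and follows essentially the same route as the paper: since $p\mid\delta$ forces $p\nmid\alpha$ and $p\nmid\beta$, the point $Q$ reduces to the (nonsingular) point at infinity modulo $p$, so Silverman's algorithm immediately returns $\lambda_p(Q)=\max\{0,-v_p(x(Q))\}\log p=2v_p(\delta)\log p$, and summing over $p\mid\delta$ gives $2\log\delta$. The paper states this more tersely by simply noting the reduction of $Q$ is nonsingular; your extra discussion of $A,B,C$ and branch checks is not wrong, but it is unnecessary overhead once nonsingularity of the reduction is observed.
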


\begin{proof}
Since $p{|}\delta$, $p{\not|}\alpha$ and $p{\not|}\beta$. 
So the reduction of $Q$ modulo $p$ is 
nonsingular. 
Therefore 
\begin{equation*}
\sum_{p\in \Omega^+}\lambda_p (Q) 
= \sum_{p\in \Omega^+}\max \{0, -v_p(\alpha/\delta^2)\}\log p 
=2 \log \delta.
\end{equation*}
\end{proof}

\begin{lem}
\label{lem:P1}
If $p \in \Omega_1$ and $Q=(\alpha/\delta^2,\beta/\delta^3)$
$\in E_D(\Q)$, 
then $\lambda_p (Q) = 0$. 
\end{lem}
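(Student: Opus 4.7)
The plan is to apply Silverman's algorithm (\cite[Theorem 5.2]{sil1}) directly and observe that the hypotheses defining $\Omega_1$ place us in its easiest branch, where the output is $0$.

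First I would verify that $E_D$ has good reduction at $p$. Since $p \in \Omega_1$ satisfies $p \nmid \Delta$ and $p \nmid D$, we get $v_p(\Delta_D) = v_p(\Delta) + 6\, v_p(D) = 0$. The paper has already noted that (\ref{eq:E_D}) is a minimal Weierstrass equation at every prime (using that $\Delta$ is $6$th-power-free and $D$ is square-free), so $v_p(\Delta_D)=0$ really does correspond to good reduction at $p$; equivalently, $N=0$ in the notation fixed just before Lemma~\ref{eq:id}.

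Next, because $p \in \Omega^-$ means $p \nmid \delta$ and we have the coprimality $\gcd(\alpha,\delta) = \gcd(\beta,\delta) = 1$, both coordinates $x_0 = \alpha/\delta^2$ and $y_0 = \beta/\delta^3$ are $p$-adic integers. In particular the division-polynomial valuations $A, B, C$ defined above are all nonnegative, and $Q$ reduces to a nonsingular point of $E_D \bmod p$ (an affine point, not $O$, unless $v_p(x_0) > 0$; either way nonsingular thanks to good reduction).

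With $N = 0$ and $v_p(x_0) \geq 0$, Silverman's algorithm terminates in its trivial branch and returns $\max\{0, -v_p(x_0)\}\log p = 0$ in this paper's normalization (which is twice that of \cite{sil1}); this is exactly the formula already used to treat $\Omega^+$ in Lemma~\ref{lem:o+}. There is no genuine obstacle here — the lemma simply isolates the case of an integral point on a good-reduction prime, and none of the identity machinery of Lemma~\ref{eq:id} (which will become essential for $\Omega_2, \Omega_3, \Omega_4$) is required.
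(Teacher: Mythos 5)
Your proof is correct and matches the paper's argument: both reduce to the observation that $p\nmid\Delta$ and $p\nmid D$ give good reduction ($v_p(\Delta_D)=0$), and that $p\nmid\delta$ makes $x_0,y_0$ $p$-integral, so Silverman's algorithm returns $\max\{0,-v_p(x_0)\}\log p=0$. You spell out the good-reduction and integrality checks a bit more explicitly than the paper's one-line proof, but the route is the same.
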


\begin{proof}
Since, $p {\not|}\Delta_D$, the image of $Q$ under the reduction modulo $p$ is 
nonsingular. 
Therefore 
$\lambda_p (Q)= \max \{0, -v_p(\alpha/\delta^2)\}\log p=0$. 
\end{proof}

\begin{lem}
\label{lem:P2}
If $p \in \Omega_2$ and $Q=(\alpha/\delta^2,\beta/\delta^3)$ 
$\in E_D(\Q)$, 
then $\lambda_p (Q) \geq -\log p$ and $v_p(\beta)\geq2$. 
In particular $\sum_{p \in \Omega_2}\lambda_p (Q) 
+\frac1{2}\log |{\beta_{\Omega_2}}| 
\geq 0$. 
\end{lem}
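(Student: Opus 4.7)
The plan is to split into cases by whether $p\mid\alpha$. Reducing the Weierstrass equation of $E_D$ modulo $p$ (using $p\mid D$, $p\nmid\delta$) yields $\bar\beta^2\equiv\bar\alpha^3\pmod p$, and the special fiber $\tilde E_D:\tilde y^2=\tilde x^3$ has its unique singularity at the origin. In the easy sub-case $p\nmid\alpha$, the reduction $\tilde Q$ is nonsingular and $p\nmid\beta$; Silverman's algorithm returns $\lambda_p(Q)=\max\{0,-v_p(x(Q))\}\log p=0\geq-\log p$, contributing $0$ to the ``in particular'' sum.

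Assume now $p\mid\alpha$. From
\[
\beta^2=\alpha^3+a_2D\alpha^2\delta^2+a_4D^2\alpha\delta^4+a_6D^3\delta^6,
\]
each term on the right has $v_p\geq 3$, so $v_p(\beta^2)\geq 4$ (the smallest even integer $\geq 3$), giving $v_p(\beta)\geq 2$.

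For the local-height bound I plan to invoke Lemma \ref{eq:id} together with the duplication formula for $x(2Q)$. Term-by-term valuation estimates yield $v_p(k_D(Q))\geq 2$, $v_p(l_D(Q))\geq 3$, $v_p(\psi_{3,D}(Q))\geq 4$, and $v_p(\psi_{2a,D}(Q))=2v_p(\beta)\geq 4$. Since $v_p(4\,l_D\,\psi_{2a,D})\geq 7$ strictly exceeds $v_p(-\Delta_D)=6$, the identity $-16k_D\psi_{3,D}+4l_D\psi_{2a,D}=-\Delta_D$ forces $v_p(\psi_{3,D}(Q))=4$ exactly. Plugging this into $x(2Q)=x(Q)-\psi_{3,D}(Q)/\psi_{2a,D}(Q)$ gives $\max\{0,-v_p(x(2Q))\}=2v_p(\beta)-4$ regardless of $v_p(\alpha)$. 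Since $(v_p(c_{4,D}),v_p(c_{6,D}),v_p(\Delta_D))=(2,3,6)$, the Kodaira type at $p$ is $I_0^*$, whose component group is killed by $2$; hence $2Q$ lies on the identity component and Silverman's algorithm returns $\lambda_p(2Q)=(2v_p(\beta)-4)\log p$. The doubling relation from Theorem \ref{thm:Neron}(1) then yields
\[
\lambda_p(Q)=\frac{\lambda_p(2Q)-2v_p(\beta)\log p}{4}=-\log p,
\]
with equality. Summing $\lambda_p(Q)+\tfrac12 v_p(\beta)\log p\geq 0$ over $\Omega_2$ (with the $p\nmid\alpha$ primes contributing zero) gives the ``in particular'' claim.

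The main obstacle will be justifying that Silverman's algorithm terminates with the nonsingular-reduction formula at $2Q$: this requires identifying the Kodaira type as $I_0^*$ and invoking the fact that its component group is $2$-torsion so that $2Q$ always lands on the identity component. The sharp equality $v_p(\psi_{3,D}(Q))=4$ extracted from Lemma \ref{eq:id} is then what makes the resulting bound exactly $-\log p$, perfectly matching the $\tfrac12 v_p(\beta)\log p \geq \log p$ coming from the curve equation.
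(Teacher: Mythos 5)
Your proof is correct, but it takes a more elaborate route than the paper's. The paper applies Silverman's decision tree (\cite[Theorem 5.2]{sil1}) directly to $Q$: from the identity~(\ref{id3,2a}) it deduces $C=v_p(\psi_{3,D}(Q))\leq 4$, notices $3B\geq 6>C$, and reads off $\Lambda=-C/4\geq -1$. You instead prove $C=4$ exactly (showing in addition the easy $\geq 4$ from term-by-term estimates), compute $v_p(x(2Q))=4-2B\leq 0$ via the duplication formula, apply Silverman's nonsingular-reduction formula to $2Q$, and then pull back to $Q$ through the doubling relation in Theorem~\ref{thm:Neron}(1). This is in effect a re-derivation, in this special case, of the very branch of Silverman's algorithm that the paper quotes; it produces the sharper equality $\lambda_p(Q)=-\log p$ rather than the one-sided bound, though the lemma only needs the bound.

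Two small imprecisions in your write-up, neither fatal. First, your detour through the Kodaira type is unnecessary: the computation $v_p(x(2Q))\leq 0$ already forces $2Q$ to reduce either to $O$ or to a point with $\tilde{x}\neq 0$, hence nonsingularly, so you never need the component group of $I_0^*$. Second, the claimed triple $(v_p(c_{4,D}),v_p(c_{6,D}),v_p(\Delta_D))=(2,3,6)$ is not always correct: $p\nmid\Delta$ only excludes $p$ dividing both $c_4$ and $c_6$, so $v_p(c_{4,D})$ may exceed $2$. The conclusion that the reduction type is $I_0^*$ is nevertheless true (a quadratic twist at an odd prime of good reduction is always type $I_0^*$), but you should not derive it from that specific valuation triple.
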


%
%+\frac1{2}\log |\frac{\beta}{\delta^3}|
%+\frac{1}{16} \log |\Delta|
%

\begin{proof}
To consider a lower bound of $\lambda_p$, we may assume 
$p|\beta$ (so $p{\not|}\delta$), since otherwise $\lambda_p(Q)=0$. 
Recall $\psi_{2,D}(Q)^2=\psi_{2a,D}(Q)$. 
Since $p|\psi_{2,D}(Q)$, $p|\psi_{2a,D}(Q)$. 
So $p$ has to divide $\alpha$. 
Then $v_p(\psi_{2a,D})\geq 3$. 
On the other hand $v_p(\psi_{2a,D})$ is even, 
and so 
$v_p(\psi_{2a,D})\geq 4$ and $B=v_p(\beta)\geq 2$. 
Clearly $v_p(k_{D})\geq2$ and $v_p(l_{D})\geq3$. 
So we have $v_p(\psi_{3,D})\leq 4$ by (\ref{id3,2a}). 
Then since $3B>C$, $\Lambda = -C/4\geq -1$. 
(Note that $p|c_{4,D}$ and so the additive reduction occurs ). 

\end{proof}

\begin{lem}
\label{lem:P3}
If $p \in \Omega_3$ and $Q=(\alpha/\delta^2,\beta/\delta^3)$ 
$\in E_D(\Q)$, then 
\begin{equation*}
\lambda_p (Q) 
+\frac1{2}\log |{\beta_{\{p\}}}| 
+\frac1{16}\log|\Delta_{{\{p\}}}|
\geq -\frac1{12}\log |\Delta_{{\{p\}}}|.
\end{equation*}
\end{lem}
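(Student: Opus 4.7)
The plan mirrors the strategy of Lemma \ref{lem:P2}: apply Silverman's algorithm \cite[Theorem 5.2]{sil1} to $Q$ at $p$, with the identity (\ref{id3,2a}) supplying the key algebraic control on the valuations. The new feature is that now $p\mid\Delta$ rather than $p\mid D$, so bad reduction (not the twist) drives the non-trivial contribution to the local height.

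Since $p\in\Omega_3$ gives $p\nmid\delta$, the valuations $A$, $B$, $C$, $K:=v_p(k_D(Q))$, $L:=v_p(l_D(Q))$ are all non-negative. If $Q$ has non-singular reduction modulo $p$, then $\lambda_p(Q)=0$ and the claim is immediate. Otherwise $Q$ reduces to the singular point, which (since $p\neq 2$) has $y$-coordinate $0$, forcing $p\mid\beta$ and hence $B\geq 1$. Combining with $p\neq 2,3$, $p\nmid D$, Lemma \ref{eq:id} gives
\[
N \;=\; v_p(-\Delta D^6) \;\geq\; \min\bigl(K+C,\;L+2B\bigr).
\]

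Silverman's algorithm then outputs $\Lambda:=\lambda_p(Q)/\log p$ via a case split on the ratio of $3B$ to $C$. In the principal case $3B>C$, the algorithm returns $\Lambda=-C/4$ as in Lemma \ref{lem:P2}, and the desired inequality reduces to $C\leq 2B+7N/12$. Using $C\leq 3B-1$ together with the case split from the identity, this follows by a short calculation: if $L+2B\leq N$ then $B\leq N/2$, while if $K+C\leq N$ then $C\leq N$, and either sub-case yields the bound after combining with $C<3B$. In the complementary case $3B\leq C$, the algorithm iterates via duplication and the output is governed by the Kodaira type at $p$ and the component of the Néron model to which $Q$ reduces. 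Since $\Delta$ is 6-th-power-free, $N\leq 5$, so the Kodaira type is one of $I_n$ ($n\leq 5$), $II$, $III$, or $IV$, giving a short finite list of $\Lambda$-values, each to be verified against the target bound.

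I expect the main obstacle to be the additive-reduction sub-cases ($III$ with $N=3$ and $IV$ with $N=4$), where $|\Lambda|$ can be as large as $1$ or $4/3$ in our doubled convention while $B$ may be as small as $1$. For these, one has to refine the bounds on $K$ and $L$ using the explicit shape of these polynomials together with the identification $k_D=f_D'-c_4D^2/16$ on the curve (where $f_D(x)=x^3+a_2Dx^2+a_4D^2x+a_6D^3$), which controls $v_p(k_D(Q))$ in terms of $v_p(c_4)$ when $Q$ reduces to the singular point, and thereby sharpens the bound on $C$ coming from the identity.
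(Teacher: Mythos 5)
Your core strategy matches the paper's---Silverman's algorithm driven by the valuation control from identity (\ref{id3,2a})---and your treatment of the additive subcase $3B>C$ is correct and complete: deducing $\min(K+C,\,L+2B)\leq N$, splitting on which term attains the minimum, and using $K,L\geq 0$ together with $3B\geq C+1$ does give the target inequality $C\leq 2B+7N/12$.

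The remaining cases, however, are not actually proved in the proposal, and the stated diagnosis of where trouble lies is mistaken. The dichotomy ``$3B>C$ versus $3B\leq C$'' is internal to Silverman's \emph{additive} branch; in the multiplicative case ($p\nmid c_4$, type $I_n$) the output is $\Lambda=-n(N-n)/N$ with $n=\min(B,N/2)$, which involves neither $C$ nor the identity, so it cannot be absorbed into your ``complementary case'' and must be treated on its own (as the paper does: for $B\geq3$ the term $B/2$ already dominates since $N\leq5$ gives $|\Lambda|\leq 6/5$, and $B=1,2$ with $N\leq 5$ is a short finite check). For the additive case with $C\geq3B$, you do not need Kodaira types or the identity $k_D=f_D'-c_{4,D}/16$ at all: Silverman gives $\Lambda=-2B/3$ directly, and since $C\geq3B$ forces $\min(2B,C)=2B\leq N$ while $p\mid c_4$, $p\neq2,3$ and (\ref{id:Delta_c4_c6}) force $N\in\{2,3,4\}$ (hence $B\leq 2$), one gets $\Lambda+\tfrac B2+\tfrac N{16}\geq-\tfrac B{24}\geq-\tfrac1{12}\geq-\tfrac N{12}$. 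The bound $N\leq4$ in the additive case, which you did not derive, is what dissolves the ``main obstacle'' you anticipated; the types $III$ and $IV$ you worried about fall out of this one-line estimate with no refinement on $K$ and $L$ required.
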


\begin{proof}
Note that 
\begin{align*}
\lambda_p (Q) 
+\frac1{2}\log |{\beta_{\{p\}}}| 
+\frac1{16}\log|\Delta_{{\{p\}}}|
=\left(\Lambda+\frac{B}{2}+\frac{N}{16}\right) \log p. 
\end{align*}
At first, we assume that $p|c_4$. 
Then $E_D$ has the additive reduction at $p$. 
By (\ref{id:Delta_c4_c6}), 
$N=v_p(\Delta)=2,3$ or $4$ since $\Delta$ is 6th-power-free.
By (\ref{id3,2a}) 
$\min \{v_p(\psi_{2a,D}), v_p(\psi_{3,D}) \}$ $\leq$ $v_p(\Delta)$. 
So we rewrite this inequality as $\min\{2B,C\}\leq N$. 
If $(3B >) 2B>C$, 
then 
\begin{align*}
\Lambda=-\frac{C}{4},\ \frac{B}{2}>\frac C{4},\ \frac N{16} \geq \frac C{16}. 
\end{align*} 
So 
\begin{align*}
\Lambda+\frac B{2}+\frac N{16} \geq \frac C{16} \geq 0. 
\end{align*}
If $3B> C \geq 2B$ (therefore $3B\geq C+1$), 
then 
\begin{align*}
\Lambda=-\frac{C}{4} \geq -\frac{3B-1}{4},\ \frac N{16} \geq \frac{2B}{16}. 
\end{align*}
So 
\begin{align*}
\Lambda+\frac B{2}+\frac N{16} \geq -\frac B{8}+\frac 1 4 
\geq -\frac 2 8+ \frac 1 4=0. 
\end{align*}
If $C \geq 3B (>2B)$, 
then 
\begin{align*}
\Lambda=-\frac{2B}{3},\ \frac N{16} \geq \frac{2B}{16}.
\end{align*} 
So 
\begin{align*}
\Lambda+\frac B{2}+\frac N{16} \geq -\frac B{24} 
\geq -\frac 2{24}=-\frac 1{12}. 
\end{align*}

Next we assume that $p{\not|}c_4$. 
Then the multiplicative reduction occurs at $p$.
Then $\Lambda=-\frac{n(N-n)}{N}$, 
where $n=\min\{B,N/2\}$. Since $N \leq 5$, $-\frac{n(N-n)}{N}\geq -6/5$. 
So if $B\geq3$, clearly $\Lambda+B/2 \geq 0$. 
%and so $\lambda_p (Q) 
%+\frac1{2}\log |{\beta_{\{p\}}}| 
%\geq 0$. 
For the cases $B=1,2$, 
by case-by-case argument, we can verify that 
%$\lambda_p (Q) 
%+\frac1{2}\log |{\beta_{\{p\}}}| + \frac1{16}\log|\Delta_{\{p\}}| 
$\Lambda+B/2+N/16 
\geq 0$ for all the case of $N=1,2,3,4,5$. % and $B=1,2$. 
\end{proof}

\begin{lem}
\label{lem:P4}
If $p \in \Omega_4$ and $Q=(\alpha/\delta^2,\beta/\delta^3)$ 
$\in E_D(\Q)$, then 
\begin{equation*}
\lambda_p (Q)
+\frac1{2}\log |{\beta_{\{p\}}}| + \frac1{16}\log|\Delta_{\{p\}}|
\geq -\frac7{16} \log p.
\end{equation*}
\end{lem}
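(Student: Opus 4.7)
My plan is to follow the pattern of Lemma \ref{lem:P3}, splitting the argument according to whether $Q$ reduces to the singular point of $E_D \pmod p$, and invoking the identity (\ref{id3,2a}) at the key step. The essential novelty here is that $p \mid D$ (in addition to $p \mid \Delta$), so the factor $D^6$ on the right of (\ref{id3,2a}) contributes $v_p=6$, while simultaneously the coefficients of $k_D$ and $l_D$ pick up extra powers of $D$, leading to stronger divisibility bounds.

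First I would observe that, since $p \mid D$, the reduction $E_D \pmod p$ is the cuspidal cubic $y^2=x^3$ with singularity at $(0,0)$. Using $p \nmid \delta$ in the defining equation $\beta^2 = \alpha^3 + a_2 D\alpha^2\delta^2 + a_4 D^2 \alpha\delta^4 + a_6 D^3 \delta^6$, one checks that $p \mid \alpha$ if and only if $p \mid \beta$. If $p \nmid \beta$, then $Q$ reduces to a smooth point, so by the nonsingular branch of Silverman's algorithm $\lambda_p(Q)=0$ and $v_p(\beta_{\{p\}})=0$, and the inequality reduces to the trivial $N/16 \geq -7/16$.

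The substantive case is $p \mid \alpha$ and $p \mid \beta$. Direct inspection of the polynomial expansions (using $p \neq 2,3$, $p \mid D$, and $v_p(\alpha)\geq 1$) produces the integer bounds $B \geq 2$, $C \geq 4$, $v_p(k_D) \geq 2$ and $v_p(l_D) \geq 3$. Substituting the last two into (\ref{id3,2a}), which forces $\min\{v_p(k_D)+C,\, v_p(l_D)+2B\} \leq N+6$, yields the dichotomy
\[
 C \leq N+4 \qquad\text{or}\qquad 2B \leq N+3.
\]
Since $v_p(c_{4,D}) \geq 2 > 0$ the reduction is additive, so Silverman's algorithm (as used in the additive part of the proof of Lemma \ref{lem:P3}) gives $\Lambda=-C/4$ if $3B>C$ and $\Lambda=-2B/3$ if $3B \leq C$. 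A case analysis using $N \leq 5$ (from the $6$th-power-freeness of $\Delta$) then verifies $\Lambda+B/2+N/16 \geq -7/16$: if $3B>C$ and $2B \leq N+3$, substituting $C \leq 3B-1$ yields $-(N+2)/16$; if $3B>C$ with only $C \leq N+4$ holding, then $2B>N+3$ forces $B \geq (N+4)/2$ and the sum is $\geq N/16 \geq 0$; and if $3B \leq C$, the dichotomy together with $3B \leq C$ forces $B \leq (N+3)/2$, whence $-B/6+N/16 \geq -(N+12)/48 \geq -7/16$.

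The main obstacle is the careful bookkeeping of integer valuations and the three-way case analysis within Silverman's algorithm: one must use the strengthening $v_p(k_D) \geq 2$ and $v_p(l_D) \geq 3$ (absent in Lemma \ref{lem:P3}) to compensate for the factor $D^6$ on the right of (\ref{id3,2a}), and the bound $N \leq 5$ is invoked in every sub-case to close the numerics. The constant $-7/16$ is tight, being attained in the first sub-case at $N=5,\ B=4,\ C=11$.
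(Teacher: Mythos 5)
Your proof is correct and uses essentially the same method as the paper: additive reduction at $p$ with $p\mid\alpha$, $p\mid\beta$ (forced via $\psi_{2,D}$, $\psi_{2a,D}$), the bounds $v_p(k_D)\geq 2$, $v_p(l_D)\geq 3$ feeding through identity (\ref{id3,2a}) to give $\min\{C+2,\,2B+3\}\leq N+6$, Silverman's values $\Lambda=-C/4$ or $-2B/3$, and the constraint $N\leq 5$ from $6$th-power-freeness. The only difference is cosmetic — you organize the case split by which branch of the dichotomy $C\leq N+4$ or $2B\leq N+3$ holds, while the paper splits by comparing $2B$ against $C$ before invoking the identity — but the estimates coincide and both arrive at $-7/16$.
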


\begin{proof}
Since $p|c_{4,D}$, the additive reduction occurs at $p$. 
We may assume $p|\beta$ as in Lemma \ref{lem:P2}, 
and so $p|\alpha$. 
By (\ref{id3,2a}), we have $\min \{ C+2,2B+3\}\leq N+6$. 

If $3B>C$ (therefore $3B\geq C+1$) 
and $2B+3>C+2$ (therefore $2B\geq C$), 
then 
\begin{align*}
\Lambda =-\frac{C}{4},\ 
%$\frac1{2}v_p(\beta)=\frac B{2} \geq \frac{C+1}{6}$, 
\frac B{2} \geq \frac{C}{4},\ 
\frac N{16}>\frac{C-4}{16}. 
\end{align*}
So 
\begin{align*}
\Lambda+\frac B{2}+\frac N{16} \geq \frac{C-4}{16} \geq -\frac 1 4. 
\end{align*}
If $3B>C$ (therefore $3B\geq C+1$) 
and $C+2 \geq 2B+3$, 
then 
\begin{align*}
\Lambda =-\frac{C}{4}\geq -\frac {3B-1}{4},\ 
\frac N{16} \geq \frac{2B-3}{16}. 
\end{align*}
So 
\begin{align*}
\Lambda+\frac B{2}+\frac N{16} 
\geq -\frac B{8}+\frac 1 {16} \geq -\frac 4 8+\frac 1{16}=-\frac 7{16}. 
\end{align*}
If $C \geq 3B$, then $C+2 \geq 2B+3$. 
So 
\begin{align*}
\Lambda =-\frac{2B}{3},\ 
\frac N{16} \geq \frac {2B-3}{16}.
\end{align*} 
Therefore 
\begin{align*}
\Lambda+\frac B{2}+\frac N{16} 
\geq -\frac B{24}-\frac 3{16} \geq -\frac 4{24}-\frac 3{16}=-\frac{17}{48}. 
\end{align*}
\end{proof}

\begin{lem}
\label{lem:p=2}
If $p =2$ and $Q=(\alpha/\delta^2,\beta/\delta^3)$ 
$\in E_D(\Q)$, then
\begin{equation*}
\lambda_2 (Q) 
%+\frac1{2}\log |\frac{\beta_{\Omega_2}}{\delta^3}| 
+\frac1{2}\log |\beta_{ \{ 2 \} }| 
\geq -\frac 2{3} \log 2.
\end{equation*}
In particular, 
\begin{equation*}
\lambda_2 (Q) 
+\frac1{2}\log |\beta_{ \{ 2 \} }|
+ \frac1{16}\log|\Delta_{\{2\}}|
\geq -\frac 5{12} \log 2.
\end{equation*}
\end{lem}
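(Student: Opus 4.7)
The plan is to mirror the case analysis of Lemmas \ref{lem:P2}--\ref{lem:P4}, adjusting for the extra powers of $2$ that enter at $p=2$. First I would dispose of the case $2\in\Omega^+$ (i.e.\ $2\mid\delta$): here $\gcd(\beta,\delta)=1$ forces $2\nmid\beta$, so $\log|\beta_{\{2\}}|=0$, while Lemma \ref{lem:o+} gives $\lambda_2(Q)=2v_2(\delta)\log 2\geq 2\log 2$, which renders the first inequality trivial. So assume $2\nmid\delta$. Since $c_{4,D}=16(a_2^2-3a_4)D^2$ satisfies $v_2(c_{4,D})\geq 4>0$, $E_D$ has additive reduction at $2$, and Silverman's algorithm yields $\Lambda=-C/4$ when $3B>C$ and $\Lambda=-2B/3$ when $C\geq 3B$, where $B=v_2(\psi_{2,D}(Q))=1+v_2(\beta)$ and $C=v_2(\psi_{3,D}(Q))$. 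In these terms the desired bound is equivalent to $\Lambda+(B-1)/2\geq -2/3$.

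Next I would extract a master inequality by taking $v_2$ of identity (\ref{id3,2a}):
\begin{align*}
\min\{\,4+v_2(k_D)+C,\ 2+v_2(l_D)+2B\,\}\leq N+6v_2(D),
\end{align*}
using $v_2(k_D),v_2(l_D)\geq 0$ (from $2\nmid\delta$), $N\leq 5$ (since $\Delta$ is 6th-power-free), and $v_2(D)\in\{0,1\}$ (since $D$ is square-free). When $B\geq 2$, so $2\mid\beta$, reduction modulo $2$ of the defining equation $\beta^2=\alpha^3+a_2D\alpha^2\delta^2+a_4D^2\alpha\delta^4+a_6D^3\delta^6$ forces either $2\mid\alpha$ (when $2\mid D$) or $f(\alpha)\equiv 0\pmod 2$ for $f(x)=x^3+a_2x^2+a_4x+a_6$ (when $2\nmid D$). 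In each such sub-case I would then read off additional lower bounds on $v_2(k_D)$ and $v_2(l_D)$ by inspecting their defining polynomials term by term, tightening the master inequality into useful upper bounds on $C$ and $B$.

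The bulk of the argument is a case analysis that branches on $3B$ versus $C$, on $v_2(D)$, on the parities of $a_2$ and $\alpha$, and on which term of the master inequality realizes the minimum; in each branch, Silverman's formula combined with the tightened master inequality yields $\Lambda+(B-1)/2\geq -2/3$. The main obstacle will be the intermediate range $2B<C<3B$ within Silverman's first branch, where the bare master inequality is too weak and the sharper lower bound on $v_2(k_D)$ forced by $2\mid\alpha$ must be used to close the gap. Finally, for the ``in particular'' conclusion, the explicit form (\ref{Delta}) gives $\Delta=-16\cdot(\cdots)$, whence $N\geq 4$ and $\tfrac{1}{16}\log|\Delta_{\{2\}}|\geq\tfrac{1}{4}\log 2$; adding this to the first bound $-\tfrac{2}{3}\log 2$ produces $-\tfrac{5}{12}\log 2$, as claimed.
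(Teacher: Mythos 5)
Your overall architecture — dispose of $2\mid\delta$, note additive reduction from $v_2(c_{4,D})\geq4$, and then attack the remaining cases with the valuation of identity~(\ref{id3,2a}) — is exactly what the paper does. But the proposal is a plan rather than a proof, and the one quantitative input you do pin down for the main inequality is not sharp enough. You feed $N\leq 5$ into the master inequality, but the case $2\mid\beta$, $2\mid D$ then gives $v_2(4\,l_D\psi_{2a,D})\geq 2+3+2B\geq 11$, while $v_2(\Delta D^6)=N+6\leq 11$. If $N=5$ these coincide, the second term of your minimum can realize the bound, and you get no upper bound on $C$ at all; in particular $C\geq 3B$ is not excluded, which would give $\Lambda=-2B/3=-2$ with $B=3$ and $\Lambda+(B-1)/2=-1<-2/3$, breaking the inequality you are trying to prove.

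What closes this is precisely the observation you postponed to the ``in particular'' step, but in a stronger form: from~(\ref{c4})--(\ref{c6}) one has $v_2(c_4)\geq4$ and $v_2(c_6)\geq5$, and since $v_2(c_6^2)$ is even while $1728\Delta=c_4^3-c_6^2$, the 6th-power-free hypothesis forces $N=v_2(\Delta)=4$ exactly (not merely $4\leq N\leq5$). With $N=4$ the case $2\mid\beta$, $2\mid D$ gives $v_2(\Delta D^6)=10<11\leq v_2(4\,l_D\psi_{2a,D})$, hence $4+v_2(k_D)+C=10$ and $C\leq4$, while $B\geq3$, so $\Lambda=-C/4\geq-1$ and $\Lambda+(B-1)/2\geq0$. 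Relatedly, the ``main obstacle'' you flag ($2B<C<3B$) is a red herring: once $N=4$ is in hand, the paper shows either $B=1$ (when $2\nmid\beta$, where $\Lambda\geq-2/3$ directly), or $B\geq3$, $C\leq4$ (when $2\mid\beta$, $2\mid D$), or $C=0$ hence $\Lambda=0$ (when $2\mid\beta$, $2\nmid D$), so the intermediate range never occurs. You should also drop the branching on the parities of $a_2$ and $\alpha$; the paper never needs it, and it signals that the case analysis has not actually been worked through.
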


\begin{proof}
By (\ref{c4}) and (\ref{c6}), we can write 
$v_2(c_4^3)=3k\ (k\geq4)$
and $v_2(c_6^2)=2l\ (l\geq5)$. 
So by (\ref{id:Delta_c4_c6}), $v_2(\Delta)=4$ since $\Delta$ is 
6th-power-free, since $\Delta \in \Z$. 
Note that $2|c_{4,D}$ and so the additive reduction occurs. 

If $2{\not|}\beta$, $B=1$. Then $\Lambda=-2/3$ or $-1/2$. 
Therefore, $\Lambda \geq -2/3$. 

If $2|\beta$ and $2|D$, then we may assume $2|\alpha$ 
to consider a lower bound of $\lambda_2$, 
for otherwise $A=0$. 
By the same argument as that in Lemma \ref{lem:P2}, 
we have $v_2(\psi_{2a,D})\geq 6$ and $v_2(\beta)\geq2$. 
Similarly by the identity (\ref{id3,2a}), we have 
$v_2(\psi_{3,D})\leq 4$ and so $\Lambda\geq-1$. 

If $2|\beta$ and $2{\not|}D$, then we may assume $v_2(\psi_{3,D})\geq 1$, 
for otherwise $C=0$ and $\Lambda=0$. 
By the identity (\ref{id3,2a}), 
we have $v_2(\psi_{2a,D})\leq 2$ (actually $v_2(\psi_{2a,D})= 2$). 
Then $\Lambda=-2/3$ or $-1/2$.  Therefore, $\Lambda \geq -2/3$. 
\end{proof}

\begin{lem}
\label{lem:p=3}
If $p =3$ and $Q=(\alpha/\delta^2,\beta/\delta^3)$ 
$\in E_D(\Q)$, then
\begin{equation*}
\lambda_3 (Q)
+\frac1{2}\log |{\beta_{\{3\}}}| + \frac1{16}\log|\Delta_{\{3\}}|
\geq -\frac7{16} \log 3.
\end{equation*}
\end{lem}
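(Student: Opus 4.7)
The plan is to mimic the proofs of Lemmas \ref{lem:P2}--\ref{lem:P4} and \ref{lem:p=2}. As in Lemma \ref{lem:p=2}, I may assume $3\nmid\delta$, since otherwise $3\in\Omega^+$ and the contribution is already counted in Lemma \ref{lem:o+}. Under this assumption I split into four subcases according to whether $3\mid\Delta$ and whether $3\mid D$. The observation that makes the transfer to $p=3$ clean is that the coefficients $-16$ and $4$ in identity (\ref{id3,2a}), and the leading coefficients $3,9,21,27$ in $k_D$ and $l_D$, all have non-negative $3$-adic valuation (and $-16, 4$ are $3$-units), so the valuation bounds $v_3(k_D)\geq 2$ and $v_3(l_D)\geq 3$ used in Lemmas \ref{lem:P3}--\ref{lem:P4} remain valid at $p=3$.

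If $3\nmid\Delta$ and $3\nmid D$, then $3\nmid\Delta_D$, hence $\lambda_3(Q)=0$ and $N=0$, and the claim is trivial. If $3\nmid\Delta$ and $3\mid D$, I imitate Lemma \ref{lem:P2}: one may assume $3\mid\beta$, which forces $3\mid\alpha$ and $B\geq 2$; identity (\ref{id3,2a}) together with $v_3(\Delta D^6)=6$ then gives $C\leq 4$, whence $\Lambda\geq -1$ and $\Lambda+B/2\geq 0$. If $3\mid\Delta$ and $3\nmid D$, I imitate Lemma \ref{lem:P3}: in both the additive subcase ($3\mid c_4$) and the multiplicative subcase ($3\nmid c_4$) the same numerical case analysis yields $\Lambda+B/2+N/16\geq -N/12$, and since $\Delta$ is $6$th-power-free we have $N\leq 5$, giving at least $-(5/12)\log 3>-(7/16)\log 3$. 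Finally, if $3\mid\Delta$ and $3\mid D$, I imitate Lemma \ref{lem:P4}: the three subcases distinguished by the relative sizes of $2B$, $3B$, $C$ yield lower bounds $0$, $-7/16$ and $-17/48$ respectively, the worst being $-7/16$, which matches the claim.

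The main obstacle is simply to verify that no step in the numerical case analyses of Lemmas \ref{lem:P2}--\ref{lem:P4} implicitly used $p\neq 3$. The only place where $p=2$ truly required separate treatment is that $v_2(-16)=4$ shifts the right-hand side of (\ref{id3,2a}) and $v_2(4)=2$ shifts the definition of $\psi_{2a,D}$; for $p=3$ both these valuations vanish, and so each chain of inequalities transfers verbatim. One minor point of care is that the conversion between $v_p(\Delta)$ and $v_p(c_4^3-c_6^2)$ through (\ref{id:Delta_c4_c6}) picks up an extra $v_3(1728)=3$ at $p=3$, but this only further constricts the admissible triples $(v_3(c_4),v_3(c_6),N)$ and never weakens the final bound.
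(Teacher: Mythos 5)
Your proposal is correct and follows essentially the same route as the paper: split according to whether $3\mid\Delta$ and whether $3\mid D$, observe that $16$ and $4$ are $3$-adic units so that the identity (\ref{id3,2a}) and the valuation bounds $v_3(k_D)\geq 2$, $v_3(l_D)\geq 3$ transfer verbatim, and then reuse the case analyses of Lemmas~\ref{lem:P2}--\ref{lem:P4}, arriving at the same worst case $-\frac{7}{16}\log 3$ in the subcase $3\mid\Delta$, $3\mid D$. One small factual slip: at $p=3$ the extra $v_3(1728)=3$ in (\ref{id:Delta_c4_c6}) \emph{relaxes} rather than "further constricts" the constraints compared to $p\neq 2,3$ --- it is precisely what makes $N=5$ possible (the paper notes this explicitly) --- but you never rely on a sharper constraint, only on $N\leq 5$ from sixth-power-freeness (which keeps $B\leq 2$ via $2B\leq N$), so the argument is unaffected.
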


\begin{proof}
We may assume $3|\Delta_D$. 
If $3{\not|}\Delta$ and $3{|}D$, 
then by the same argument as that in Lemma \ref{lem:P2}, 
$\lambda_3 (Q)
+\frac1{2}\log |{\beta_{\{3\}}}|\geq 0$. 
If $3{|}\Delta$ and $3{\not|}D$, 
then in the case of $3|c_4$ we cannot deny the possibility of $N=5$, 
but anyway $B\leq2$. 
So by the same argument as that in Lemma \ref{lem:P3}, 
\begin{equation*}
\lambda_3 (Q)
+\frac1{2}\log |{\beta_{\{3\}}}| + \frac1{16}\log|\Delta_{\{3\}}|
\geq -\frac1{12} \log 3. 
\end{equation*}
If $3{|}\Delta$ and $3{|}D$, 
then by the same argument as that in Lemma \ref{lem:P4}, 
\begin{equation*}
\lambda_3 (Q)
+\frac1{2}\log |{\beta_{\{3\}}}| + \frac1{16}\log|\Delta_{\{3\}}|
\geq -\frac7{16} \log 3.
\end{equation*}
\end{proof}

We now finish the proof of Theorem \ref{main}. 

\begin{proof}[proof of Theorem \ref{main}]
By (\ref{decomp}) and Lemma \ref{lem:arc} 
\begin{align*}
\hat{h}(P) \geq \frac{1}{4}\log |D| 
+&\frac{1}{16} \log \frac{(1-|q|)^8}{|q|}
+\frac1{4}\log |\frac{\omega_{1}}{2\pi}|\\
-&\frac32\log \delta 
+\frac1{2}\log |{\beta}|
+\frac{1}{16} \log |\Delta|
+\sum_{p:\text{prime}}\lambda_p(Q). 
\end{align*}
If $2,3 \not\in \Omega^+$, then 
\begin{align*}
\frac1{2}\log |{\beta}|
+&\frac{1}{16} \log |\Delta|
+\sum_{p:\text{prime}}\lambda_p(Q) \\
&= %\geq 
\sum_{S=\Omega^+, \Omega_1, \Omega_2, \Omega_3, \Omega_4, \{2\},\{3\}} 
\left(\frac1{2}\log |{\beta}_S|
+\frac{1}{16} \log |\Delta_S|
+\sum_{p\in S}\lambda_p(Q) \right)
\end{align*} 
and a lower bound of the right hand side is given by Lemmas 
\ref{lem:o+}-\ref{lem:p=3}, 
%\ref{lem:P1}, \ref{lem:P2}, \ref{lem:P3}, \ref{lem:P4}, 
%\ref{lem:p=2}, 
that is 
$2\log \delta -\frac7{16}\sum_{p|\Delta,p\neq2}\log p-\frac5{12}\log 2$. 
Even if $2$ or $3$ $\in \Omega^+$, the same bound is valid, 
since the lower bounds given in Lemmas \ref{lem:p=2} and \ref{lem:p=3} 
are negative. 
%\begin{equation*}
%
%\end{equation*}
\end{proof}

%\subsection{uniform lower bound of $y^2=x^3+2Dx^2+163D^2x+2205D^3$}

\begin{cor}
\label{cor:low013}
Let %$t \in \Q$, $D(t)=t^6+4t^4+30t^3+5t^2+54t+245$, 
$E_D$ be the elliptic curve $y^2=x^3+2Dx^2+163D^2x+2205D^3$ $(D>0)$ 
and 
$Q \in E_D(\Q)\setminus E_D(\Q)[2]$. Then 
\begin{align}
\hat{h}(Q)>\frac14\log D -3.5472. %-3.8503. 
\end{align}
\end{cor}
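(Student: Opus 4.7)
The plan is to deduce the corollary as a direct numerical specialization of Theorem~\ref{main}. The base curve underlying the family is $E: y^2 = x^3 + 2x^2 + 163x + 2205$ (so that $E_D$ is its quadratic twist by the square-free integer $D$, as required in Theorem~\ref{main}; in fact this is exactly the curve produced by the construction of Section~\ref{sec:fam_quad} applied to $f = t^3 + t + 3$ and $F = t^4 + 2t^2 + 12t$). Since $D>0$, the theorem reads
\begin{equation*}
\hat{h}(Q) > \tfrac{1}{4}\log D + \tfrac{1}{16}\log \tfrac{(1-|q|)^{8}}{|q|} + \tfrac{1}{4}\log\!\bigl|\tfrac{\omega_{1}}{2\pi}\bigr| - \tfrac{7}{16}\!\!\sum_{p\mid \Delta,\, p\ne 2}\!\!\log p - \tfrac{5}{12}\log 2,
\end{equation*}
so it suffices to show that the constant on the right exceeds $-3.5472$.

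First I would compute $\Delta = \Delta_{E}$. Using the identity $\Delta = 16 B^{2}\operatorname{disc}(f)^{3}$ from the remark following the lemma of Section~\ref{sec:fam_quad} (with $A = 1$, $B = 3$, so $\operatorname{disc}(f) = -4A^{3} - 27 B^{2} = -247 = -13\cdot 19$), one obtains
\begin{equation*}
\Delta = 16\cdot 9\cdot (-247)^{3} = -2^{4}\cdot 3^{2}\cdot 13^{3}\cdot 19^{3}.
\end{equation*}
Each prime exponent is at most $4$, so $\Delta$ is 6th-power-free and Theorem~\ref{main} applies. The odd prime divisors of $\Delta$ are $3$, $13$, $19$, and a direct evaluation gives
\begin{equation*}
-\tfrac{7}{16}(\log 3 + \log 13 + \log 19) - \tfrac{5}{12}\log 2 \approx -3.18,
\end{equation*}
with accurate truncated bounds obtained using $\log 2, \log 3, \log 13, \log 19$ to sufficient precision.

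Next I would compute the archimedean constants $\omega_{1}$ and $|q|$ for $E$. Since $\Delta < 0$, the cubic $x^{3} + 2x^{2} + 163x + 2205$ has a single real root $e_{1}\approx -9.45$ and a complex-conjugate pair; the real period $\omega_{1}$ is then computed via the AGM (Cohen's Algorithm 7.4.7), and $\omega_{2}$ is determined so that $\operatorname{Re}(\omega_{2}/\omega_{1}) = -1/2$, after which $q = -\exp(-2\pi\operatorname{Im}(\omega_{2})/\omega_{1})$. A rough estimate using $j = c_{4}^{3}/\Delta \approx 215$ already suggests $|q|$ is small (of order $10^{-2}$ or less) and $\omega_{1}$ is of moderate size, so that $\tfrac{1}{16}\log((1-|q|)^{8}/|q|) + \tfrac{1}{4}\log|\omega_{1}/(2\pi)|$ contributes a small quantity larger than $-0.37$.

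Finally, summing the archimedean and non-archimedean contributions from the two computations above, and keeping each numerical evaluation with enough guard digits to justify the strict inequality, one verifies that the resulting constant exceeds $-3.5472$, proving the corollary. The only genuine work is the numerical computation of $\omega_{1}$ and $|q|$ to sufficient precision; everything else is direct substitution, since the hypotheses of Theorem~\ref{main} have already been checked. The main (very mild) obstacle is therefore ensuring that the numerical bounds on the period and the nome are rigorous rather than merely approximate, which is standard given the unconditional error bounds for the AGM iteration.
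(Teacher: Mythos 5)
Your proposal is correct and is essentially the same as the paper's proof: the paper also simply evaluates the constant in Theorem~\ref{main} for $E: y^2 = x^3 + 2x^2 + 163x + 2205$, computing $\Delta = -2^4\cdot 3^2\cdot 13^3\cdot 19^3$, $\omega_1 \approx 1.04995$, and $q \approx -0.10979$ (via PARI/GP) and substituting. Your numerical estimates match (the archimedean terms contribute about $-0.367$ and the non-archimedean penalty about $-3.180$, for a total constant just above $-3.5472$), so the only difference is that you spell out the intermediate arithmetic where the paper just lists the PARI/GP commands.
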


\begin{proof}
Let $E$ be the elliptic curve defined by 
$y^2=x^3+2x^2+163x+2205$. Then we have 
$\Delta=-2^4 3^2 13^3 19^3$, 
$\omega_1=1.04995090\cdots$,    %0815083710114245177
$q=-0.10978666\cdots     %12509157613345788382
$ 
%
%by using softwares such as PARI/GP. 
%Finally we apply Theorem \ref{main}.
%
by the following commands in PARI/GP v.2.3.4 
and the corollary follows. 
\begin{verbatim}
E=ellinit([0,2,0,163,2205]); 
factor(E.disc)
E.omega[1]
exp(2*Pi*I*E.omega[2]/E.omega[1])
\end{verbatim}
\end{proof}

\section{An example}
\label{sec:comp-canon-height}

In this section we consider a family of 
quadratic twists of an elliptic curve. 
%constructed by the method described in Section \ref{sec:fam_quad}. 
The family in the following lemma is 
constructed by the method described in Section \ref{sec:fam_quad}. 

\begin{lem}
\label{P013-arc}
Let $t \in \Z$, $D(t)=t^6+4t^4+30t^3+5t^2+54t+245$, 
$E_D$ the elliptic curve defined by 
$y^2=x^3+2D(t)x^2+163D(t)^2x+2205D(t)^3$ and  
$P$ the point $( D(t)(t^4+2t^2+12t)$, $D(t)^2(t^3+t+3) )$ on $E_D$. 
Then 
\begin{equation*}
{\lambda}_{\infty}(P)<\frac5{3}\log D(t) + 1.2177. 
\end{equation*}
\end{lem}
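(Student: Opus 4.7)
The plan is to apply Cohen's algorithm (\cite[Algorithm 7.5.7]{cohen0}) to the point $P$ on $E_D$, mirroring the setup in the proof of Lemma \ref{lem:arc}, but extracting an upper bound. Since $\delta = 1$, $\alpha = D(t)(t^4+2t^2+12t)$, $\beta = y_0 = D(t)^2(t^3+t+3)$, the formula reads
\begin{equation*}
\lambda_\infty(P) = \frac{1}{16}\log\left|\frac{\Delta_D}{q}\right| + \frac{1}{4}\log\left|y_0^2\cdot\frac{\omega_{1,D}}{2\pi}\right| - \frac{1}{2}\log|\theta|.
\end{equation*}
Substituting $|\Delta_D| = |\Delta|D(t)^6$, $|\omega_{1,D}| = \omega_1 D(t)^{-1/2}$, and $|y_0|^2 = D(t)^4(t^3+t+3)^2$ makes the explicit $D(t)$-dependence equal to $\tfrac{5}{4}\log D(t) + \tfrac{1}{2}\log|t^3+t+3|$ plus a curve-dependent constant $C_0$ computable from the PARI/GP data of Corollary \ref{cor:low013}. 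Then I would bound $|t^3+t+3|$ by direct polynomial comparison: $(t^3+t+3)^2 \leq 2\,D(t)$ for every $t \in \Z$, the extremal case being $t = -3$ with ratio $729/371 < 2$, giving $\tfrac{1}{2}\log|t^3+t+3| \leq \tfrac{1}{4}\log D(t) + \tfrac{1}{4}\log 2$ and bringing the $\log D(t)$-coefficient so far to $\tfrac{3}{2}$.

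The remaining task is to bound $-\tfrac{1}{2}\log|\theta|$ by $\tfrac{1}{6}\log D(t) + \text{const}$, accounting for the gap $\tfrac{5}{3} - \tfrac{3}{2} = \tfrac{1}{6}$, which requires a lower bound $|\theta| \geq c\,D(t)^{-1/3}$ with explicit $c$. My approach is to combine the N\'eron characterization (Theorem \ref{thm:Neron}(2))---$\lambda_\infty(P) = \log|x_0| + O(1)$ as $P \to O$---with the elementary bound $|x_0| \leq 2\,D(t)^{5/3}$ (equivalent to $(t^4+2t^2+12t)^3 \leq 8\,D(t)^2$, verifiable by a finite check). The $O(1)$ from N\'eron's theorem then translates, via Cohen's explicit series for $\theta$, into a uniform bound whose constants can be traced through explicitly; the decisive cancellation comes from the $\omega_{1,D}^{-1}\propto D(t)^{1/2}$ scaling hidden in $\theta$, which lifts $|z_P|\sim|x_0|^{-1/2}\sim D(t)^{-5/6}$ to $|\theta|\sim D(t)^{-1/3}$.

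Finally, assembling $C_0$, $\tfrac{1}{4}\log 2$, and the explicit constant from the $\theta$-bound---using the PARI/GP values $|\Delta| = 2^4 3^2 13^3 19^3$, $\omega_1 = 1.04995090\ldots$, $|q| = 0.10978666\ldots$ already tabulated in the proof of Corollary \ref{cor:low013}---verifies that the total additive constant is at most $1.2177$. The main obstacle is this last lower bound on $|\theta|$: while $|\theta|\sim D(t)^{-1/3}$ follows formally from N\'eron's property, making the implicit multiplicative constant uniform in $t$ requires careful estimates of the infinite product defining $\theta$ in Cohen's algorithm and of the Abel--Jacobi parameter $z_P$.
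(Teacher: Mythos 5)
Your route is genuinely different from the paper's, and it has a real gap at the decisive step. The paper does not invoke Cohen's $\theta$-function formula at all for this lemma: it first shifts the model by $x \mapsto x-30D(t)$ so that the cubic $x^3-88x^2+2743x-27885$ has a single real root $c\approx 20.55$, giving $x(Q) > cD(t)>0$ for every real point $Q$ on the shifted curve. Then it uses Tate's \emph{real} series
$\lambda_\infty(P')=\log|x(P')|+\sum_{i\ge 0}4^{-(i+1)}\log z(2^iP')$, observes that the explicit ratio $x(P')/D(t)^{5/3}<3.37933$ gives the leading term, and shows $z(Q)<1$ for every real $Q$ (using $u>20.55$), so the entire series contributes a \emph{negative} correction. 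There is nothing to estimate from below.

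By contrast, your approach needs a \emph{lower} bound $|\theta|\geq c\,D(t)^{-1/3}$ with an explicit constant, and this is precisely the part you do not prove. You observe correctly that the explicit $D$-dependence from Cohen's formula is $\frac54\log D(t)+\frac12\log|t^3+t+3|$, and the polynomial inequality $(t^3+t+3)^2\le 2D(t)$ (extremal at $t=-3$, ratio $729/371$) brings the coefficient up to $\frac32$. But the remaining $\frac16\log D(t)$ must come from below on $|\theta|$, and you defer that to a ``careful estimate'' of the infinite product and of $z_P$. This is not routine: the N\'eron asymptotic $\lambda_\infty(P)\approx\log|x(P)|$ is only valid as $P\to O$, i.e.\ when $x_0$ is large, and the implicit ``$O(1)$'' there \emph{depends on the curve}, hence on $t$. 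When $t=-1$ or $t=-2$ the quantity $t^4+2t^2+12t$ is $\le 0$, so $x_0=D(t)(t^4+2t^2+12t)\le 0$ and $P$ sits near the $2$-torsion locus rather than near $O$; the heuristic $|z_P|\sim|x_0|^{-1/2}$ breaks down there, and you would need a separate, uniform lower bound on $|\theta|$ away from lattice points. In short, the asserted $|\theta|\sim D(t)^{-1/3}$ with a traced constant is exactly the heart of the lemma and has to be established, not just flagged as an obstacle; the paper's shift-and-Tate's-series approach eliminates the $\theta$-estimate entirely by making every correction term manifestly negative.
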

\begin{proof}
We fix an integer $t$. 
We transform the Weierstrass equation 
by $x \mapsto x-30D(t)$. This yields the 
elliptic curve defined by 
\begin{equation*}
y^2=x^3-88D(t)x^2+2743D(t)^2x-27885D(t)^3
\end{equation*}
and $P$ corresponds to the point 
$(D(t)(t^4+2t^2+12t+30)$, $D(t)^2(t^3+t+3))$. 
We denote them by $E_D'$ and $P'$ respectively. 
Now ${\lambda}_{\infty}(P)$ on $E_D$ equals 
${\lambda}_{\infty}(P')$ on $E_D'$ (Remark \ref{rem:shift}).

The polynomial 
\begin{math}
x^3-88x^2+2743x-27885
\end{math}
has only one real root, which we denote by $c$, 
and its approximate value is 
$20.55166\cdots$ (this can easily be found by softwares like Maple).  
So the  only real root of $x^3-88D(t)x^2+2743D(t)^2x-27885D(t)^3$
is $c D(t)$, and so we have $x(Q)>20.55166D(t)>0$ for $Q \in E_D'(\R)$ 
(it is easy to see that $D(t)>0$ for $t \in \R$).  
So ${\lambda}_{\infty}(P')$ is computable by using Tate's series 
as follows. 
\begin{align*}
{\lambda}_{\infty}(P')=\log |x(P')|
+ \sum_{i=0}^{\infty}\frac1{4^{i+1}}\log |z(2^iP')|, 
\end{align*}
where 
\begin{align*}
z(P')=1-\frac {b_{4,D}}{x(P')^2}
-\frac {2b_{6,D}}{x(P')^3}-\frac {b_{8,D}}{x(P')^4}.
\end{align*}
Note that for any $Q \in E_D'(\R)$ 
we have $z(Q)>0$, 
since $z(Q)$ satisfies the 
equaliy $z(Q)x(Q)^4=\psi_2(Q)^2x(2Q)$. 

By elementary calculus, 
we can compute the bounds of the series as follows. 
\begin{align*}
0<\frac{x(P')}{D(t)^{5/3}}=
\frac{t^4+2t^2+12t+30}{(t^6+4t^4+30t^3+5t^2+54t+245)^{2/3}}
<3.37933 ,
\end{align*}
So 
\begin{align*}
\log x(P')<\frac5{3}\log D(t)+\log(3.37933)
=\frac5{3}\log D(t)+1.217674.
\end{align*}
For any point $Q \in E_D'(\R)$, there exists 
$u (>20.55166)$ such that $x(Q)=D(t)u$. 
So 
\begin{align*}
z(Q)=1-\frac {b_{4,D}}{x(Q)^2}
-\frac {2b_{6,D}}{x(Q)^3}-\frac {b_{8,D}}{x(Q)^4}
=1-\frac{5486}{{u}^{2}}+\frac{223080}{{u}^{3}}
-\frac{2291471}{{u}^{4}} <1.
\end{align*}
Therefore 
\[\sum_{i=0}^{\infty}\frac1{4^{i+1}}\log z(2^iP')<0.\] 
\end{proof}

\begin{lem}
\label{P013-nonarc}
We consider the situation of Lemma \ref{P013-arc}, 
and assume that $D(t)$ is square-free. 
Then we have 
\begin{align*}
%\hat{h}_f(P)
\sum_{p:{\rm prime}}\lambda_p(P) \leq-\log D(t). 
\end{align*}
\end{lem}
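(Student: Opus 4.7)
The plan is to evaluate each local height $\lambda_p(P)$ via Silverman's algorithm and sum. Because $x(P)=D(t)(t^4+2t^2+12t)$ and $y(P)=D(t)^2(t^3+t+3)$ are integers, the denominator $\delta$ of Section~\ref{sec:unif lower} equals $1$, so Lemma \ref{lem:o+} contributes $0$ and $\lambda_p(P)=0$ at every prime of good reduction for $E_D$. It therefore suffices to bound $\lambda_p(P)$ at the primes dividing $\Delta_D=\Delta_E\cdot D(t)^6$, which split into the primes $p\mid D(t)$ and the primes in $\{2,3,13,19\}$ dividing $\Delta_E$.

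The key input is the factorization
\begin{align*}
\psi_{3,D}(P)=D(t)^4\,g(t),\qquad \beta=y(P)=D(t)^2\,f(t),
\end{align*}
where $f(t)=t^3+t+3$ and $g(t)\in\Z[t]$ arises by substituting $x=D(t)X$ with $X=t^4+2t^2+12t$ into the definition of $\psi_{3,D}$ and extracting the common factor $D(t)^4$ (the four monomials of $\psi_{3,D}$ are bihomogeneous in $(x,D)$ of weight~$4$). Combined with the square-freeness of $D(t)$, this yields the uniform bounds
\begin{align*}
B:=v_p(\beta)\ge 2,\qquad C:=v_p(\psi_{3,D}(P))\ge 4
\end{align*}
at every prime $p\mid D(t)$.

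For each $p\mid D(t)$ one also has $v_p(c_{4,D})=v_p(c_4)+2v_p(D(t))\ge 2$, so $E_D$ has additive reduction at $p$, and the two branches of Silverman's algorithm give $\Lambda=-2B/3\le -4/3$ (if $3B\le C$) or $\Lambda=-C/4\le -1$ (if $3B>C$); in either case $\lambda_p(P)\le -\log p$. For $p\in\{2,3,13,19\}$ with $p\nmid D(t)$ the twist $E_D$ has the same reduction type at $p$ as $E$, namely additive at $p=2$ and multiplicative at $p=3,13,19$ (because $c_4=-7760=-2^4\cdot 5\cdot 97$), and in every sub-case of Silverman's algorithm one finds $\Lambda\le 0$, hence $\lambda_p(P)\le 0$. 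Summing and using square-freeness,
\begin{align*}
\sum_{p}\lambda_p(P)\;\le\;\sum_{p\mid D(t)}(-\log p)\;=\;-\log D(t).
\end{align*}

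The main obstacle is the uniform inequality $\Lambda\le -1$ for every $p\mid D(t)$, including the awkward primes $p=2,3$. This collapses to the two estimates $B\ge 2$ and $C\ge 4$, which in turn follow purely from the factorization $\psi_{3,D}(P)=D(t)^4g(t)$ together with square-freeness of $D(t)$; thus the specific shape of $P$ allows us to sidestep the finer case analysis (and the identity of Lemma \ref{eq:id}) needed for the generic bounds of Lemmas \ref{lem:p=2}--\ref{lem:P4}.
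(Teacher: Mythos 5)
Your proof is correct and follows essentially the same route as the paper. In both, the crux is that $x(P)=D(t)X$ and $y(P)=D(t)^2Y$ with $X,Y\in\Z[t]$, which together with square-freeness of $D(t)$ forces $B=v_p(\psi_{2,D}(P))\ge 2$ and $C=v_p(\psi_{3,D}(P))\ge 4$ at every $p\mid D(t)$; Silverman's additive-reduction cases then give $\lambda_p(P)\le -\log p$. (The paper reaches $B\ge 2$ by noting $v_p(\psi_{2a,D}(P))\ge 3$ is even, you read it off directly from $\beta=D^2f(t)$; same content.) The only cosmetic divergence is at the remaining primes: the paper invokes in one line that $P$ is integral, hence $\lambda_p(P)\le 0$ for every finite $p$, whereas you re-derive this by inspecting the reduction types at $\{2,3,13,19\}$. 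Both yield the same bound, and both deliberately bypass Lemma~\ref{eq:id}, which is needed only for arbitrary rational points rather than this structured family. One small point you could make explicit for completeness: one should verify (or cite) that the given Weierstrass equation of $E_D$ is minimal before applying Silverman's algorithm — this holds because $\Delta$ is 6th-power-free and $D(t)$ is square-free — and that $2\nmid D(t)$ identically, so that the distinction between $v_p(\beta)$ and $B=v_p(2\beta)$ never bites.
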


\begin{proof}
To ease the notation, we write $D(t)=D$. 
Since the discriminant of $E_D$ is $\Delta_{D}=D^6\Delta
=-D^6\cdot2^4 3^2 13^3 19^3$ 
and $D$ is square-free, 
$E_D$ is a minimal Weierstrass equation.  
%We use the notation in Definition \ref{omega}. 
Since $P$ is an integral point, 
$\lambda_p(P)$ is non-positive for every $p$ 
and so 
\begin{align*}
%\sum_{p \in \Omega_2}\lambda_p(P)+\sum_{p|\Delta}\lambda_p(P) 
%\leq 
%\hat{h}_f(P)
\sum_{p:{\rm prime}}\lambda_p(P) 
\leq \sum_{p|D}\lambda_p(P). 
\end{align*}
%
%
%If $D(t)$ is not divisible by $2,3,13$ or $19$, then 
%$\sum_{p|D}\lambda_p(P)=\sum_{p \in \Omega_2}\lambda_p(P) 
%\leq -\log D$ by Lemma \ref{lem:P2}. 
%If $D(t)$ is divisible by $p \in \{2,3,13,19\}$, then 
%
%

$E_D$ has the additive reduction at $p$ dividing $D$. 
By the definition of $\psi_{2a,D}$ and $\psi_{3,D}$, 
it is clear that
$v_p(\psi_{2a,D}(P))\geq3$ and $v_p(\psi_{3,D}(P))\geq4$. 
Note that $v_p(\psi_{2a,D}(P))$ is even and so 
$v_p(\psi_{2a,D}(P))\geq4$. 
So for $p$ such that $p|D$ % other than 2, 3, 
we have 
$\lambda_p(P)\leq -\frac44\log p$ or 
$\lambda_p(P)\leq -\frac43\log p$ 
%when the reduction type at $p$ is 
%III, III$^*$, I$_M^*$ or 
%IV, IV$^*$ respectively. 
by Silverman's algorithm. %\cite[Theorem 5.2]{sil1}. 
In any cases $\lambda_p(P)\leq -\log p$ and 
so 
\begin{equation*}
\sum_{p|D}\lambda_p(P)\leq \sum_{p|D}(-\log p) =-\log D.
\end{equation*}
\end{proof}

Lemmas \ref{P013-arc}, \ref{P013-nonarc} imply the
following proposition. 

\begin{prop}
In the situation of Lemma \ref{P013-nonarc},  
we have 
\begin{align*}
\hat{h}(P)< \frac23\log D(t)+1.2177.
\end{align*}
\end{prop}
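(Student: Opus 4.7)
The plan is to invoke the height decomposition (\ref{decomp}) to split $\hat{h}(P)$ into its archimedean and non-archimedean contributions, and then to substitute the upper bounds already obtained in Lemmas \ref{P013-arc} and \ref{P013-nonarc}. Concretely, I would first write
\[
\hat{h}(P) \;=\; \lambda_{\infty}(P) \;+\; \sum_{p:\text{prime}} \lambda_p(P),
\]
and then insert $\lambda_{\infty}(P) < \frac{5}{3}\log D(t) + 1.2177$ from Lemma \ref{P013-arc} together with $\sum_{p:\text{prime}} \lambda_p(P) \le -\log D(t)$ from Lemma \ref{P013-nonarc}.

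Adding these two bounds directly gives
\[
\hat{h}(P) \;<\; \frac{5}{3}\log D(t) + 1.2177 - \log D(t) \;=\; \frac{2}{3}\log D(t) + 1.2177,
\]
which is the desired inequality.

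There is no real obstacle here: the substantive work has already been done in the two preceding lemmas. Lemma \ref{P013-arc} handled the archimedean local height by translating the Weierstrass model (invoking Remark \ref{rem:shift} to preserve $\lambda_\infty$), checking positivity of $x(Q)$ and $z(Q)$ on the real component, and estimating Tate's series; Lemma \ref{P013-nonarc} handled the finite places by exploiting the integrality of $P$ (which forces $\lambda_p(P) \le 0$ at every finite prime) and applying Silverman's algorithm at primes dividing $D(t)$. The present proposition is simply the additive combination of these two estimates via (\ref{decomp}), so the only thing to verify is the arithmetic $\tfrac{5}{3} - 1 = \tfrac{2}{3}$.
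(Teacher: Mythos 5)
Your proposal is correct and is exactly the argument the paper intends: the text explicitly states that Lemmas \ref{P013-arc} and \ref{P013-nonarc} imply the proposition, and the only step is to add the two local-height bounds via the decomposition (\ref{decomp}) and observe that $\frac{5}{3}-1=\frac{2}{3}$.
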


We now finish the proof of Theorem \ref{thm:D013}.

\begin{proof}[proof of Theorem \ref{thm:D013}]
Since $D(t)^2(t^3+t+3)\neq 0$, 
$P$ is not a 2-torsion point, 
and so by Corollary \ref{cor:low013} 
not a torsion point for $|t|\geq 11$. 
By elementary calculus, we have 
\begin{align*}
\frac{\frac23\log D(t) +1.2177}{\frac14\log D(t) -3.5472}<4, 
\end{align*}
for $|t|\geq 2216$. 
Therefore by the property of the canonical height, 
there does not exists a point $R$ such that $P=mR$ 
($|m|\geq 2$). 

\end{proof}

%\section*{Acknowledgements}

\bibliographystyle{jplain}
\bibliography{RefM}

\end{document}